\documentclass[a4paper,12pt]{article}
\usepackage[
    top=3cm,
    bottom=2.5cm,
    inner=3.0cm,
    outer=3.0cm,
    footskip=1.4cm,
    headsep=0.8cm,
    headheight=0.5cm,
]{geometry}
\usepackage[utf8]{inputenc}
\usepackage[T1]{fontenc}
\usepackage[english]{babel}
\usepackage{amsmath,amsfonts,amssymb,amsthm,amstext,amscd}
\usepackage{bbm}
\usepackage{microtype}
\usepackage{lmodern}
\usepackage[
	colorlinks=true,
	linkcolor=blue, 
	citecolor=red, 
	urlcolor=magenta, 
    hyperfootnotes=false
]{hyperref}
\usepackage{xcolor}
\usepackage{graphicx}
\usepackage{subcaption}
\usepackage{tikz}
\usepackage{dsfont}
\usepackage{fancyhdr}
\usepackage{enumerate}
\usepackage{authblk}
\usepackage{tikz-cd}
\usepackage[figurename=Fig.]{caption}

\usepackage[style=numeric,backend=biber]{biblatex}
\theoremstyle{definition}
\newtheorem{theorem}{Theorem}[section]
\newtheorem{lemma}[theorem]{Lemma}
\newtheorem{corl}[theorem]{Corollary}
\newtheorem{prop}[theorem]{Proposition}

\newtheorem{exemp}[theorem]{Example}

\newtheorem{defin}[theorem]{Definition}

\newcommand{\symmat}[1]{\mathbb{S}^{#1}}
\newcommand{\psdmat}[1]{\mathbb{S}_+^{#1}}
\newcommand{\nnegreal}[1]{\mathbb{R}_{+}^{#1}}
\newcommand{\innprod}[2]{\langle #1,#2 \rangle}
\newcommand{\tr}[1]{\operatorname{tr}(#1)}
\newcommand{\fld}[1]{\mathbb{#1}}
\newcommand{\allones}{\mathbbm{1}}
\newcommand{\matalgebra}[1]{M_{#1}(\mathbb{R})}
\newcommand{\littletaller}{\mathchoice{\vphantom{\big|}}{}{}{}}
\newcommand\restr[2]{{%
  \left.\kern-\nulldelimiterspace
  #1
  \littletaller
  \right|_{#2}
}}

\renewcommand{\sc}{\textsc}

\begin{document}
\pagestyle{plain}
\begin{center}
\textbf{\Large Total Conformal Rigidity in Graphs} \\
\vspace{1.5cc}
{ \sc Henrique Assumpção\footnotemark[1],
  Gabriel Coutinho\footnotemark[1],
  Chris Godsil\footnotemark[2]}\\
\vspace{0.3cm}
{\small \footnotemark[1]Department of Computer Science, Federal University of Minas Gerais, Brazil \\
\footnotemark[2]Department of Combinatorics \& Optimization, University of Waterloo, Canada}\\
\vspace{0.3cm}
{\small \texttt{[henrique.soares,gabriel]@dcc.ufmg.br}, \texttt{cgodsil@uwaterloo.ca}}
\end{center}

\begin{abstract}
We introduce and study a generalization of conformal rigidity for graphs.
A graph is \textit{$k$-conformally rigid} if the uniform edge weights simultaneously
maximize the sum of the $k$ smallest nontrivial Laplacian eigenvalues and minimize
the sum of the $k$ largest, over all normalized non-negative weight assignments. 
A graph that is $k$-conformally rigid for every $k$ is called \textit{totally conformally rigid}.
Our main result is a complete characterization: a graph is totally conformally
rigid if and only if it is \textit{edge-rigid}, meaning every canonical spectral
embedding onto a Laplacian eigenspace is edge-isometric. We further show this
is equivalent to all edges of the graph being pairwise Laplacian-cospectral, that is, the removal of any single edge yields a graph with the same Laplacian characteristic polynomial.
Using semidefinite programming duality, we establish this equivalence and
derive a polynomial-time algorithm for deciding edge-rigidity using only integer
arithmetic. We provide a combinatorial characterization of edge-rigidity in terms
of powers of the Laplacian matrix and connect it to the walk-regularity of signed line graphs.
We show that a graph is edge-rigid if and only if it is either $1$-walk-regular or
$1$-walk-biregular, and we finally show an equivalence based on monotone gauges and gauge duality. As an application, we derive two non-trivial combinatorial consequences of total conformal rigidity, relating it to the number of spanning trees and the Kirchhoff index of the graph.
\end{abstract}

\section{Introduction}\label{sec:intro}
Let $G = (V,E)$ be a simple, connected, undirected graph on $n$ vertices with at least one edge, and let $A := A(G)$ denote its adjacency matrix. 
For an edge $ab \in E$, we define
\[
z_{ab} := e_a - e_b,
\]
where $e_a \in \fld{R}^V$ denotes the $a$-th canonical basis vector, and we also let $L_{ab} := z_{ab}z_{ab}^T$.
Given a vector of edge weights $w \in \fld{R}^E$, the weighted Laplacian is the linear operator $\mathcal{L}:\fld{R}^E\rightarrow \symmat{V}$ such that
\begin{equation}\label{eq:laplacian_def}
    \mathcal{L}(w) := \sum_{ab \in E}w_{ab}L_{ab} = \sum_{ab \in E}w_{ab}z_{ab}z_{ab}^T,
\end{equation}
where $\symmat{V}$ denotes the space of symmetric matrices with rows and columns indexed by $V$. 
If we restrict ourselves to the set of nonnegative edge weights $\nnegreal{E}$, then the image of $\mathcal{L}$ is contained in the cone of positive semidefinite matrices $\psdmat{V}$. 
The adjoint $\mathcal{L}^*:\symmat{V} \mapsto \fld{R}^{E}$ of $L$ is given by
\[
\innprod{\mathcal{L}(w)}{X} = \innprod{w}{\mathcal{L}^*(X)}\quad\text{for any }w \in \fld{R}^E,\;
X \in \symmat{V},
\]
where $\innprod{A}{B} = \tr{AB^T}$ for any two matrices $A,B \in \matalgebra{V}$.
By expanding this definition, we obtain
\begin{equation}
    \mathcal{L}^*(X)_{ab} = z_{ab}^T X z_{ab} = X_{aa} + X_{bb} - 2X_{ab}
    \quad \text{for any } ab \in E.
\end{equation}
Throughout this work, we will be interested in optimization problems for the eigenvalues of $\mathcal{L}(w)$ with respect to normalized edge weights $w$. 
The simplex of valid edge weights is thus defined as
\[
\Delta_E := \{w \in \fld{R}^E: w \geq 0,\; \allones^Tw = |E|\}.
\]
For the remainder of the text, we shall use $\allones$ to denote the all-ones vector with the appropriate dimension, which will be clear from the context.
For instance, in the definition of $\Delta_E$ above, $\allones \in \fld{R}^E$.
We shall order the eigenvalues of $\mathcal{L}(w)$ as
\[
0 = \lambda_1(w) \leq \lambda_2(w) \leq \ldots \leq \lambda_n(w).
\]
For any $w \in \Delta_E$, it follows that $\lambda_1(w) = 0$, since $\mathcal{L}(w)\allones = 0$. 
The rank of $\mathcal{L}(w)$ is thus at most $n-1$, but it may be smaller since $w$ is allowed to have entries equal to zero. 
In the case of the unweighted Laplacian $L := \mathcal{L}(\allones)$, since $G$ is connected, we have that $\lambda_2(\allones) > \lambda_1(\allones) = 0$, and hence $L$ has rank $n-1$ (see, for instance,~\cite[Ch.13]{godsil2001algebraic}).
By the \textit{nontrivial} eigenvalues of $\mathcal{L}(w)$, we shall mean $\lambda_2(w),\ldots,\lambda_n(w)$.

In \cite{steinerberger2025conformallyrigidgraphs}, Steinerberger and Thomas studied
the concept of \textit{conformal rigidity} of a graph, which can be defined as
follows:
\begin{defin}
We say that a graph $G$ is \textit{conformally rigid} if for
any $w,w' \in \Delta_E$, we have
\[
\lambda_2(w) \leq \lambda_2(\allones) \quad\text{and}\quad \lambda_n(\allones) \leq \lambda_n(w'),
\]
that is, if $\allones$ maximizes the second smallest eigenvalue of $\mathcal{L}(w)$, and minimizes
its largest eigenvalue over all valid edge weights. If the former holds, we say the
graph is \textit{lower conformally rigid}, and if the latter holds, we say it is
\textit{upper conformally rigid}.
\end{defin}
The terminology is inspired by conformal geometry: just as a conformal map
distorts distances while preserving angles, a reweighting of edges distorts the
spectral geometry of the graph while leaving its combinatorial structure intact.
A conformally rigid graph resists such distortion --- no reweighting can push the
spectrum beyond its unweighted range. Conformal rigidity can be characterized in
terms of spectral embeddings onto the eigenspaces of $\lambda_2(\allones)$ and
$\lambda_n(\allones)$. The authors of~\cite{steinerberger2025conformallyrigidgraphs} also show that distance-regular graphs are conformally rigid, 
and provide sufficient conditions for Cayley graphs to exhibit conformal rigidity. 
In a follow-up paper, Gouveia, Steinerberger and Thomas~\cite{gouveia2025conformalrigidityspectralembeddings}
further studied necessary and sufficient conditions for Cayley graphs and
vertex-transitive graphs to be conformally rigid.

In this work, we generalize conformal rigidity to sums of eigenvalues.
More precisely, we define the following notion:
\begin{defin}\label{defin:k_conf_rigid}
    Let $S_k(w)$ and $s_k(w)$ denote the sum of the $k$ largest and $k$ smallest
    nontrivial eigenvalues of $\mathcal{L}(w)$, respectively, for some $k \in [n-1]$.
    We say that $G$ is \textit{$k$-conformally rigid} if, for any $w,w' \in \Delta_E$,
    \[
    s_k(w) \leq s_k(\allones) \quad\text{and}\quad S_k(\allones) \leq S_k(w').
    \]
    If $\allones$ minimizes $S_k$ over $\Delta_E$, we say that $G$
    is \textit{upper $k$-conformally rigid}, and
    if it maximizes $s_k$, we say it is \textit{lower $k$-conformally rigid}.
    If $G$ is $k$-conformally rigid for all $k \in [n-1]$, we say that $G$ is
    \textit{totally conformally rigid}.
\end{defin}
The notion of conformal rigidity introduced
in~\cite{steinerberger2025conformallyrigidgraphs} is equivalent to our notion of
$1$-conformal rigidity.

Laplacian eigenvalues arise naturally in several areas of graph theory
and combinatorial optimization. Ky Fan's classical result~\cite{kyfan}
characterizes sums of these eigenvalues variationally, and they appear prominently in the study
of isoperimetric constants~\cite{MOHAR1989274} and graph
partitioning~\cite{mohar1991laplacian}. More recently, sums of Laplacian
eigenvalues have been studied in connection with the \emph{spectral width}
$\lambda_n(w) - \lambda_2(w)$, whose minimization over edge weights is itself a
semidefinite program related to uniform sparsest cuts~\cite{steinerberger2025conformallyrigidgraphs}.
A well-known open problem involving these sums is \emph{Brouwer's conjecture}~\cite{Haemers2010-jk},
which asserts that, for any simple undirected graph $G$,
$S_k(\allones) \leq |E| + \binom{k+1}{2}$ for any $k \in [n-1]$.
The conjecture has been verified for many classes of graphs but remains open in general.

A key notion in our work is that of \textit{edge-rigidity}---the condition that every canonical spectral embedding of $G$ onto a Laplacian eigenspace is edge-isometric. As we will show, if $L = \sum_{i=1}^r\lambda_iE_i$ is the spectral decomposition of $L$, then edge-rigidity is equivalent to the condition that $\mathcal{L}^*(E_i) = \gamma_i\allones$ for some constant $\gamma_i > 0$ for every $i \in \{2,\ldots,r\}$. Therefore, a consequence of edge-rigidity is that $\mathcal{L}^* (L^\dagger)$ is a constant vector, where $L^\dagger$ is the Moore-Penrose pseudoinverse of $L$, leading to a vast list of derived properties satisfied by edge-rigid graphs, for instance:
\begin{itemize}
    \item The effective resistance of any edge is the same, and equal to $(|V|-1)/|E|$.
    \item Every edge has the same commute time from one of its endpoints to the other.
    \item The number of spanning trees containing any given edge is the same (that is, the graph is equiarboreal).
\end{itemize}
The fact that the last condition holds for 1-walk-regular graphs is due to Godsil~\cite{equiarboreal_godsil}, and is a consequence of the fact that $1$-walk regular graphs are edge-rigid, as we will see in Section~\ref{sec:walk_regularity}.

Our main contributions are as follows:
\begin{itemize}
    \item In Sections~\ref{sec:spec_embd} and~\ref{sec:cospectrality}, we introduce
    the notion of \emph{edge-rigidity} and show that
    it is equivalent to requiring that all edges of the graph be pairwise Laplacian-cospectral,
    meaning the removal of any single edge yields a graph with the same Laplacian
    characteristic polynomial.
    We further show that all edge-transitive graphs are edge-rigid.
    \item In Section~\ref{sec:sdp_eigensums}, by analyzing the optimal solutions to
    semidefinite programs (SDPs) used to model $k$-conformal rigidity, we show that
    total conformal rigidity is equivalent to edge-rigidity. We prove this by means of
    an auxiliary Lemma that provides further insight into upper $k$-conformal rigidity for
    certain values of $k$.
    \item In Section~\ref{sec:laplacian_walks}, we provide a combinatorial
    characterization of edge-rigidity in terms of powers of the Laplacian matrix, and show how to use this to obtain a polynomial-time algorithm for deciding edge-rigidity using
    only integer arithmetic. We also characterize edge-rigidity in terms of
    walk-regularity of signed line graphs.
    \item In Section~\ref{sec:walk_regularity}, we show that every edge-rigid graph
    is either regular or biregular bipartite, and that in each case the walk structure
    is highly constrained: regular edge-rigid graphs are walk-regular, and biregular
    bipartite edge-rigid graphs are walk-biregular. Combined with earlier results,
    this yields a complete characterization: a graph is edge-rigid if and only if it
    is $1$-walk-regular or $1$-walk-biregular.
    \item In Section~\ref{sec:gauge_duality}, we show that edge-rigidity is equivalent to an equality involving the monotone gauge function given by the sum of the eigenvalues of $\mathcal{L}(w)$ and its dual, providing a novel perspective on the algebraic and geometric properties of edge-rigid graphs.
    \item In Section~\ref{sec:majorization}, we derive two non-trivial combinatorial consequences of total conformal rigidity by applying the theory of majorization of vectors, relating conformal rigidity to the number of spanning trees and the Kirchhoff index of the graph.
\end{itemize}

\section{Edge-rigidity and spectral embeddings}\label{sec:spec_embd}

In this section, we will define the notion of edge-rigidity in terms of spectral
embeddings of graphs. These were originally introduced by Hall~\cite{hall1970}
to study the problem of placing $n$ connected points in Euclidean space, and have
since become a prominent technique with applications in many different areas of
computer science and mathematics, such as graph
clustering~\cite{LUO20032213,Gallagher02072024} and data
mining~\cite{spec_signed,spec_stat}.

Let $\lambda > 0$ be an eigenvalue of $L$ with multiplicity $m$, and let
$\mathcal{E}_\lambda \cong \fld{R}^m$ be its corresponding eigenspace.

\begin{defin}
Let $U \in \fld{R}^{n \times m}$ be a matrix whose columns form an orthonormal
basis for $\mathcal{E}_\lambda$. The collection of vectors
$\mathcal{U} = \{u_1, \ldots, u_n\} \subset \fld{R}^m$, where $u_i = U^Te_i$ is
the $i$-th row of $U$, is called the \textit{canonical spectral embedding} of $G$
onto $\mathcal{E}_\lambda$.
\end{defin}

These embeddings provide a geometric realization of $G$ in $\fld{R}^m$. Because
the columns of $U$ are eigenvectors corresponding to $\lambda > 0$, they are
orthogonal to $\allones$. Consequently, the embedding is centered at the origin,
meaning $\sum_{i=1}^n u_i = 0$. A particularly structured class of embeddings
consists of those where all adjacent vertices are separated by the same Euclidean
distance.

\begin{defin}
A canonical spectral embedding $\mathcal{U} = \{u_1,\ldots,u_n\}$ onto $\mathcal{E}_\lambda$ is \textit{edge-isometric} if there exists a constant
$\gamma > 0$ such that $\|u_a - u_b\|^2 = \gamma$ for all $ab \in E$.
\end{defin}

The existence of an edge-isometric embedding places severe algebraic constraints
on the graph. Let $X := U U^T$ be the orthogonal projector onto $\mathcal{E}_\lambda$.
Note that $X$ is independent of the choice of orthonormal basis $U$: different bases
yield geometrically congruent embeddings related by an orthogonal transformation of
$\fld{R}^m$, and the edge-isometric property is therefore an intrinsic property of
the eigenspace $\mathcal{E}_\lambda$ rather than of any particular basis. The squared
distance between adjacent vertices $a$ and $b$ is given by
\[
\|u_a - u_b\|^2 = \|u_a\|^2 + \|u_b\|^2 - 2u_a^Tu_b = X_{aa} + X_{bb} - 2X_{ab}
= \mathcal{L}^*(X)_{ab},
\]
hence the canonical embedding onto $\mathcal{E}_\lambda$ is edge-isometric if and
only if $\mathcal{L}^*(X) = \gamma\allones$ for some constant $\gamma > 0$.

The notion of conformal rigidity studied in~\cite{steinerberger2025conformallyrigidgraphs}
requires that the embeddings onto $\mathcal{E}_{\lambda_2}$ and $\mathcal{E}_{\lambda_n}$
alone be edge-isometric. We are interested in the stronger condition that
\emph{every} canonical embedding is edge-isometric, capturing a global spectral
regularity of the graph:
\begin{defin}
    We say that a graph $G$ is \textit{edge-rigid} if all canonical embeddings onto
    the Laplacian eigenspaces are edge-isometric, that is, for every eigenprojector
    $E_i$ of $L$ corresponding to a nonzero eigenvalue there exists $\gamma_i > 0$
    such that $\mathcal{L}^*(E_i) = \gamma_i\allones$.
\end{defin}
Before proceeding, we briefly remark on the embedding induced by the trivial eigenvalue $\lambda_1(\allones) = 0$.
Its eigenprojector is $E_1 := (1/n)\allones\allones^T$, and thus every vertex would be mapped into a single vector,
resulting in an embedding which is trivially edge-isometric, with $\mathcal{L}^*(E_1) = 0$. 
Since this is true for any connected graph, we omit it from the definitions above.

\section{Laplacian cospectrality}\label{sec:cospectrality}

Recall that two graphs $G$ and $H$ are called \textit{cospectral} if their adjacency matrices share the same characteristic
polynomial. The study of cospectral graphs was started in the 1950s~\cite{gunthard_old}, and has since become a central topic in spectral graph theory~\cite{Schwenk1973,Godsil1982,VANDAM2003241}.
Recently, Godsil, Sun and Zhang~\cite{godsil2025cospectralgraphsobtainededge} showed how to use $1$-walk-regularity to obtain families of non-isomorphic cospectral graphs via edge deletion.
In this section, we shall similarly define a notion of cospectrality for edges that can be used to characterize edge-rigidity. In a later section, we shall see that these notions can also be characterized in terms of $1$-walk-regular graphs. 

Given an edge $ab \in E$, the matrix $L - L_{ab}$ is the Laplacian of the graph $G - ab$ obtained by removing the edge $ab$ while retaining the vertices $a$ and $b$, so we define:

\begin{defin}
    Two edges $ab, cd \in E$ are called \textit{Laplacian-cospectral} if the
    matrices
    \[
    L - L_{ab}\quad\text{and}\quad L - L_{cd}
    \]
    have the same characteristic polynomial.
\end{defin}

Similarly to vertex cospectrality, the notion of
Laplacian-cospectrality can be stated in terms of the adjugate of the
characteristic polynomial of $L$. To do so, we will need the following standard
result, which we state without proof (see, for
instance,~\cite{horn1990matrix} or~\cite{bhatiaMatrixAnalysis1997}).

\begin{lemma}\label{lemma:matrix_det_lemma}
For a general square matrix $M$,
\[
\det(M + uv^T) = \det(M) + v^T\operatorname{adj}(M)u,
\]
where $\operatorname{adj}$ denotes the adjugate of $M$.
\hfill$\blacksquare$
\end{lemma}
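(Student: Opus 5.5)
The plan is to prove the matrix determinant lemma by a density argument: first handle invertible $M$, then extend to all $M$ by continuity (or polynomial identity). Both sides of the identity are polynomial functions of the entries of $M$, $u$ and $v$. Indeed, $\operatorname{adj}(M)$ has entries that are $(n-1)\times(n-1)$ minors of $M$, so $v^T\operatorname{adj}(M)u$ is polynomial. It therefore suffices to verify the identity on the Zariski-dense (equivalently, Euclidean-dense) set of invertible matrices, after which it holds identically.

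For invertible $M$, factor
\[
M + uv^T = M\,(I + M^{-1}uv^T),
\]
so that $\det(M+uv^T) = \det(M)\det(I + M^{-1}uv^T)$. The key step is the rank-one identity $\det(I + xv^T) = 1 + v^Tx$ for any vectors $x,v$. To see it, note that $xv^T$ has rank at most one. Its eigenvalues are therefore $v^Tx$ (with eigenvector $x$ when $x\neq 0$) together with $0$ repeated $n-1$ times, as justified by $\operatorname{tr}(xv^T)=v^Tx$ and the rank bound. Hence $I+xv^T$ has eigenvalues $1+v^Tx,1,\dots,1$. Alternatively, one can use the block identity
\[
\begin{pmatrix} I & 0\\ v^T & 1\end{pmatrix}\begin{pmatrix} I+xv^T & x\\ 0 & 1\end{pmatrix}\begin{pmatrix} I & 0\\ -v^T & 1\end{pmatrix} = \begin{pmatrix} I & x\\ 0 & 1+v^Tx\end{pmatrix},
\]
which avoids any eigenvalue discussion and works over any commutative ring.

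Taking $x = M^{-1}u$ gives $\det(M+uv^T) = \det(M)(1 + v^TM^{-1}u) = \det(M) + v^T(\det(M)M^{-1})u$. Since $\operatorname{adj}(M) = \det(M)M^{-1}$ for invertible $M$, this is exactly the claimed formula. Finally, for singular $M$ we apply the result to $M + tI$, which is invertible for all but finitely many $t$, and let $t\to 0$. Both sides are continuous (indeed polynomial) in $t$, so the identity persists.

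The only step requiring any care is the rank-one determinant identity together with the passage to singular $M$. Using the block-matrix factorization handles the first cleanly. The polynomial-identity argument then handles the second, since an identity between polynomials in the entries that holds on the nonempty open set $\{\det M\neq 0\}$ holds everywhere.
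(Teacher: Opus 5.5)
Your proof is correct. The paper does not prove this lemma; it quotes it as standard and points to Horn--Johnson and Bhatia, so there is no in-paper argument to compare step by step. What you give is one of the usual textbook proofs, and each step checks out:
\begin{itemize}
\item the block factorization, which multiplies out to exactly $\begin{pmatrix} I & x\\ 0 & 1+v^Tx\end{pmatrix}$;
\item the identity $\operatorname{adj}(M)=\det(M)M^{-1}$ in the invertible case;
\item the passage to singular $M$ through $M+tI$, which is invertible on a punctured neighbourhood of $t=0$.
\end{itemize}

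The other common route avoids the density step entirely. Compute $\det\begin{pmatrix} M & -u\\ v^T & 1\end{pmatrix}$ in two ways. Taking the Schur complement of the $(2,2)$ entry gives $\det(M+uv^T)$. The Cauchy expansion of a bordered determinant, $\det\begin{pmatrix} A & b\\ c^T & d\end{pmatrix}=d\det A-c^T\operatorname{adj}(A)b$, gives $\det(M)+v^T\operatorname{adj}(M)u$. This argument is valid verbatim over any commutative ring.

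That generality matters here, because the paper applies the lemma with $M=xI-L$ for an indeterminate $x$ and compares the two sides as polynomials in $x$. Your argument also covers this use, in either of two ways. Your remark that the identity is a polynomial identity in the entries of $M,u,v$ lets you specialize those entries to $\mathbb{R}[x]$. Alternatively, the identity holds at every real value of $x$, and two polynomials that agree at infinitely many points are equal.
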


\begin{lemma}\label{lemma:adj_lemma}
    Let $ab, cd \in E$. Then $ab$ and $cd$ are Laplacian-cospectral if and only if
    \[
    z^T_{ab}\operatorname{adj}(xI-L)z_{ab} = z^T_{cd}\operatorname{adj}(xI-L)z_{cd}
    \]
    as polynomials in $x$.
\end{lemma}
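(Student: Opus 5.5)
We apply Lemma~\ref{lemma:matrix_det_lemma} directly to the characteristic polynomial of $L - L_{ab}$. For an edge $ab \in E$ we have $L - L_{ab} = L - z_{ab}z_{ab}^T$, so
\[
\det\bigl(xI - (L - L_{ab})\bigr) = \det\bigl(xI - L + z_{ab}z_{ab}^T\bigr).
\]
Taking $M = xI - L$, $u = z_{ab}$ and $v = z_{ab}$ in Lemma~\ref{lemma:matrix_det_lemma}, this becomes
\[
\det\bigl(xI - L + L_{ab}\bigr) = \det(xI - L) + z_{ab}^T\operatorname{adj}(xI-L)z_{ab}.
\]

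The lemma is stated for a general square matrix, but here $M$ has polynomial entries. The cleanest justification is that the identity holds for every real value of $x$. Both sides are polynomials in $x$ (the adjugate has polynomial entries), and polynomials agreeing at infinitely many points are equal. Alternatively, one can view the identity over the field $\fld{R}(x)$, or as a polynomial identity in the entries of $M$.

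The same computation applies to $cd$. Hence the characteristic polynomials of $L - L_{ab}$ and $L - L_{cd}$ are
\[
\phi(L,x) + z_{ab}^T\operatorname{adj}(xI-L)z_{ab}
\quad\text{and}\quad
\phi(L,x) + z_{cd}^T\operatorname{adj}(xI-L)z_{cd},
\]
where $\phi(L,x) = \det(xI-L)$ is common to both. The two characteristic polynomials therefore coincide if and only if the two quadratic forms coincide as polynomials in $x$. This proves both directions at once.

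There is no genuine obstacle. The only point needing care is the sign convention and the passage from a numerical identity to an identity of polynomials, both handled above.
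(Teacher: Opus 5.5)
Your proof is correct and follows the paper's argument exactly: apply the matrix determinant lemma with $M = xI-L$ and $u=v=z_{ab}$, then cancel the common term $\det(xI-L)$ to compare the two characteristic polynomials. Your remark that the identity holds as polynomials in $x$ (by agreement at infinitely many real $x$, or by working over $\fld{R}(x)$) is a small point of rigor that the paper leaves implicit.
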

\begin{proof}
Take $M := xI - L$ and $u := v := z_{ab}$. Then
\[
    \det(xI-(L-L_{ab}))
    =
    \det(xI-L+z_{ab}z_{ab}^{T}),
\]
and, from Lemma~\ref{lemma:matrix_det_lemma}, we have
\[
    \det(xI-(L-L_{ab}))
    =
    \det(xI-L)
    +
    z_{ab}^{T}\operatorname{adj}(xI-L)z_{ab}.
\]
The same formula holds for the edge $cd$. Since the first term $\det(xI-L)$ is
independent of the edge, the characteristic polynomials of $L-L_{ab}$ and
$L-L_{cd}$ are equal if and only if
\[
    z_{ab}^{T}\operatorname{adj}(xI-L)z_{ab}
    =
    z_{cd}^{T}\operatorname{adj}(xI-L)z_{cd}. \qquad \qedhere
\]
\end{proof}

With this, we can prove an equivalence between edge-rigidity and
Laplacian-cospectrality:

\begin{theorem}\label{thm:cospectral_rigid_char}
    All edges of $G$ are pairwise Laplacian-cospectral if and only if $G$ is edge-rigid.
\end{theorem}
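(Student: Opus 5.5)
By Lemma~\ref{lemma:adj_lemma}, Laplacian-cospectrality of all pairs of edges is equivalent to the polynomial $p_{ab}(x) := z_{ab}^T\operatorname{adj}(xI-L)z_{ab}$ being independent of the edge $ab$. The plan is to expand the adjugate spectrally and translate this polynomial identity into a condition on each eigenprojector. Let $L=\sum_{i=1}^r \lambda_i E_i$ be the spectral decomposition, with $\lambda_1=0$, $E_1=(1/n)\allones\allones^T$, and $\phi(x)=\det(xI-L)=\prod_i (x-\lambda_i)^{m_i}$. For $x$ not an eigenvalue, $\operatorname{adj}(xI-L)=\phi(x)(xI-L)^{-1}=\phi(x)\sum_i (x-\lambda_i)^{-1}E_i$. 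Since $z_{ab}\perp\allones$, the $E_1$ term vanishes, and
\[
p_{ab}(x)=\phi(x)\sum_{i=2}^r \frac{\mathcal{L}^*(E_i)_{ab}}{x-\lambda_i}
\]
as rational functions, hence as polynomials.

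\emph{Edge-rigid implies cospectral.} If $\mathcal{L}^*(E_i)=\gamma_i\allones$ for all $i\ge 2$, the right-hand side does not depend on $ab$. Therefore $p_{ab}=p_{cd}$ for all edges, and Lemma~\ref{lemma:adj_lemma} gives pairwise Laplacian-cospectrality.

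\emph{Cospectral implies edge-rigid.} Suppose $p_{ab}=p_{cd}$ for all edges. Dividing by $\phi(x)$ gives an equality of rational functions
\[
\sum_{i\ge 2}\frac{\mathcal{L}^*(E_i)_{ab}-\mathcal{L}^*(E_i)_{cd}}{x-\lambda_i}=0 .
\]
The $\lambda_i$ are distinct, so the partial fraction decomposition is unique (equivalently, multiply by $x-\lambda_j$ and let $x\to\lambda_j$). Hence $\mathcal{L}^*(E_i)_{ab}=\mathcal{L}^*(E_i)_{cd}$ for each $i\ge2$, and so $\mathcal{L}^*(E_i)=\gamma_i\allones$ for some constant $\gamma_i$.

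\emph{Positivity of $\gamma_i$.} Since $E_i$ is positive semidefinite, $\gamma_i=z_{ab}^TE_iz_{ab}\ge0$. If $\gamma_i=0$, then $E_iz_{ab}=0$ for every edge. But the $z_{ab}$ span $\allones^\perp$ because $G$ is connected, and $E_i$ annihilates $\allones$ when $i\ge2$. This would force $E_i=0$, a contradiction, so $\gamma_i>0$.

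The only delicate point is the coefficient extraction in the converse. It needs the eigenvalues in the spectral decomposition to be distinct, so that multiplicities are absorbed into the projectors, and it needs the identity to be handled as one of rational functions rather than pointwise. Everything else is routine.
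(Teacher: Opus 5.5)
Your proof is correct and follows the paper's argument: both expand $\operatorname{adj}(xI-L)=\sum_i \frac{p(x)}{x-\lambda_i}E_i$ and read off the coefficients $z_{ab}^TE_iz_{ab}$ by linear independence. The differences are minor. You extract each coefficient by uniqueness of partial fractions (residues at $\lambda_j$), where the paper takes derivatives of order $m_i-1$. You also check explicitly, via connectivity, that $\gamma_i>0$, which the paper's proof leaves implicit.
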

\begin{proof}
Let $L := \sum_{i=1}^r\lambda_iE_i$ be the spectral decomposition of $L$, with
$0 = \lambda_1 < \ldots < \lambda_r$, and let $m_i := \tr{E_i}$ be the multiplicity
of $\lambda_i$. The characteristic polynomial of $L$ is
\[
    p(x):=\det(xI-L)=\prod_{i=1}^{r}(x-\lambda_i)^{m_i}.
\]
In the $\lambda_i$-eigenspace, the matrix $\operatorname{adj}(xI-L)$ acts as
multiplication by
\[
    \frac{p(x)}{x-\lambda_i}
    =
    (x-\lambda_i)^{m_i-1}
    \prod_{j\neq i}(x-\lambda_j)^{m_j},
\]
therefore
\[
    \operatorname{adj}(xI-L)
    =
    \sum_{i=1}^{r}
    \frac{p(x)}{x-\lambda_i}E_i.
\]
From Lemma~\ref{lemma:adj_lemma}, all edges are Laplacian-cospectral if and only
if $z_{ab}^{T}\operatorname{adj}(xI-L)z_{ab}$ is independent of the edge $ab$.
Using the spectral decomposition of $\operatorname{adj}(xI-L)$, we have
\[
    z_{ab}^{T}\operatorname{adj}(xI-L)z_{ab}
    =
    \sum_{i=1}^{r}
    \frac{p(x)}{x-\lambda_i}
    z_{ab}^{T}E_i z_{ab}.
\]
We now claim that the polynomials $\{p(x)/(x-\lambda_i)\}_{i=1}^r$ are linearly
independent. Indeed, the polynomial $p(x)/(x-\lambda_i)$ vanishes at $x=\lambda_i$
with order $m_i-1$, whereas $p(x)/(x-\lambda_j)$ vanishes at $x=\lambda_i$ with
order $m_i$ for $j\neq i$. If a linear combination of said polynomials equals zero,
then taking the derivative of order $m_i-1$ at $x=\lambda_i$
isolates the $i$-th coefficient, which in turn implies that each coefficient has to be zero.
It then follows that the polynomial $z_{ab}^{T}\operatorname{adj}(xI-L)z_{ab}$
is independent of $ab$ if and only if each coefficient $z_{ab}^{T}E_i z_{ab}$ is
independent of $ab$. This is precisely the statement that $\mathcal{L}^*(E_i)$ is constant
for every $i$, which concludes the proof.
\end{proof}

The previous characterization allows us to prove edge-rigidity for edge-transitive
graphs:

\begin{corl}\label{corl:edge_transitive_rigid}
    All edge-transitive graphs are edge-rigid.
\end{corl}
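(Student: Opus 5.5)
By Theorem~\ref{thm:cospectral_rigid_char}, it suffices to show that in an edge-transitive graph any two edges $ab, cd \in E$ are Laplacian-cospectral. That is, we must show $L - L_{ab}$ and $L - L_{cd}$ have the same characteristic polynomial. The plan is to realize an automorphism carrying $ab$ to $cd$ as a permutation matrix and observe that it conjugates one matrix into the other.

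Let $\sigma$ be an automorphism of $G$ with $\sigma(\{a,b\}) = \{c,d\}$, which exists by edge-transitivity. Let $P$ be the associated permutation matrix, defined by $Pe_v = e_{\sigma(v)}$.
\begin{itemize}
\item Since $\sigma$ preserves adjacency and degrees, we have $PAP^T = A$ and $PDP^T = D$, where $D$ is the degree matrix. Hence $PLP^T = L$.
\item Next, $Pz_{ab} = e_{\sigma(a)} - e_{\sigma(b)} = \pm z_{cd}$, with the sign depending on whether $\sigma(a)=c$ or $\sigma(a)=d$. The sign disappears in the outer product, so $PL_{ab}P^T = (Pz_{ab})(Pz_{ab})^T = L_{cd}$.
\item Combining these, $P(L-L_{ab})P^T = L - L_{cd}$. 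Since $P^T = P^{-1}$, the two matrices are similar and therefore have the same characteristic polynomial.
\end{itemize}
Thus every pair of edges is Laplacian-cospectral, and Theorem~\ref{thm:cospectral_rigid_char} shows that $G$ is edge-rigid.

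A direct route is also available. Each eigenprojector $E_i$ is a polynomial in $L$, so $PE_iP^T = E_i$. Then
\[
z_{cd}^TE_iz_{cd} = z_{ab}^TP^TE_iPz_{ab} = z_{ab}^TE_iz_{ab},
\]
so $\mathcal{L}^*(E_i)$ is constant on $E$. The constant $\gamma_i$ is positive because $\sum_{ab}z_{ab}^TE_iz_{ab} = \langle L, E_i\rangle = \lambda_i m_i > 0$.

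The only point that needs care is the orientation sign of $z_{ab}$ under $\sigma$, and it cancels as shown. The argument uses only edge-transitivity and needs no vertex-transitivity.
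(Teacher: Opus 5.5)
Your proof is correct and follows the paper's own argument. You conjugate $L - L_{ab}$ by the permutation matrix of an automorphism sending $ab$ to $cd$ to get $L - L_{cd}$, with the orientation sign cancelling in the outer product; this gives Laplacian-cospectrality, and Theorem~\ref{thm:cospectral_rigid_char} then applies. Your alternative direct route via $PE_iP^T = E_i$ is also valid. Its identity $\sum_{ab \in E} z_{ab}^T E_i z_{ab} = \lambda_i m_i$ usefully makes explicit the positivity $\gamma_i > 0$, which the definition of edge-rigidity requires but the paper's cospectrality proof does not address.
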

\begin{proof}
Let $ab, cd\in E$. Since $G$ is edge-transitive, there is an automorphism $\sigma$
of $G$ mapping the edge $ab$ to the edge $cd$. Let $P$ be the permutation matrix
corresponding to $\sigma$. Then $P^{T}LP=L$, and
\[
    P^{T}z_{ab}=\pm z_{cd},
\]
hence
\[
    P^{T}L_{ab}P
    =
    P^{T}z_{ab}z_{ab}^{T}P 
    =
    z_{cd}z_{cd}^{T}
    =
    L_{cd}.
\]
Therefore $P^{T}(L-L_{ab})P = L-L_{cd}$, so $L-L_{ab}$ and $L-L_{cd}$ are
similar, and hence cospectral.
\end{proof}
\section{SDPs and $k$-Conformal Rigidity}\label{sec:sdp_eigensums}
In this section, we will show how to model $\max_w s_k(w)$ and $\min_w S_k(w)$ as semidefinite programs, and use the structure of their optimal solutions to study $k$-conformal rigidity. 

We start with the well-known formulations to compute sums of eigenvalues as semidefinite programs. These are straightforward consequences of the work of Ky Fan~\cite{kyfan}, and are now standard in the semidefinite programming literature (see, for instance,~\cite{Ben-Tal2001-hl,boyd2004convex}). Define the following sets of positive semidefinite matrices:
\begin{align}
      \mathcal{X}_k & := \{ X \in \symmat{V} : 0 \preceq X \preceq I,\;\tr{X} = k \} \label{Xk},\\
      \mathcal{Z}_k & := \{Z \in \mathcal{X}_k: Z\allones = 0\},
\end{align}
where for $A, B \in \symmat{V}$, we write $A \succeq B$ to denote that
$A-B \in \psdmat{V}$. It can be shown that
\begin{equation}\label{eq:max_k_eig_sdp}
  S_k(w) = \max_{X \in \mathcal{X}_k} \tr{\mathcal{L}(w)X},
\end{equation}
and, as $\mathcal{L}(w)$ has a zero eigenvalue with eigenvector $\allones$, it can also be shown that
\begin{equation}\label{eq:min_k_eig_sdp}
  s_k(w) = \min_{Z \in \mathcal{Z}_k} \tr{\mathcal{L}(w)Z}.
\end{equation}

If $\mathcal{L}(w) = \sum_{i=1}^n\lambda_iv_iv_i^T$ is the spectral decomposition of
$\mathcal{L}(w)$, with $\|v_i\| = 1$, then an optimal solution to~\eqref{eq:max_k_eig_sdp} is given by
\[X = \sum_{i=n-k+1}^nv_iv_i^T\]
and an optimal solution to~\eqref{eq:min_k_eig_sdp} is given by
\[Z = \sum_{i=2}^{k+1}v_iv_i^T.\]

If $L$ has $r$ distinct eigenvalues, and $k = k_j = \sum_{i=r-j+1}^r\tr{E_i}$ for
some $j \in [r]$, then $X = \sum_{i=n-k+1}^nv_iv_i^T$ is the unique optimizer
to~\eqref{eq:max_k_eig_sdp}. If $j \in [r-1]$ and $k = k_{j} + t$ with
$0 < t < \tr{E_{r-j}}$, then any matrix of the form $X = X_j + \tilde{E}$,
where $E_{r-j} \succeq \tilde{E} \succeq 0$ and $\tr{\tilde{E}} = t$, is an
optimal solution, and analogous results hold for the $k$ smallest non-trivial eigenvalues. In fact, we continue our treatment below focusing on $S_k$, but all results have analogous versions for $s_k$.

Consider the problem of minimizing $S_k(w)$ over all valid edge weights:
\[
\min_{w \in \Delta_E}S_k(w) = \min_{w \in \Delta_E}\max_{X \in \mathcal{X}_k}
\tr{\mathcal{L}(w)X}.
\]
By writing the semidefinite dual of~\eqref{eq:max_k_eig_sdp}, we can express this
as
\begin{equation}\label{eq:min_topeigsum_primal}
    \min_{w \in \Delta_E}S_k(w) = \min\{ky + \tr{Y} :
    yI + Y - \mathcal{L}(w) \succeq 0,\; Y \succeq 0,\; y \in \fld{R},\; w\in\Delta_E\}.
\end{equation}
The corresponding dual is
\begin{equation}\label{eq:min_topeigsum_dual}
    \max\{|E|x : \mathcal{L}^*(X) \geq x\allones,\; X \in \mathcal{X}_k,\; x \in \fld{R}\}.
\end{equation}
Note that setting $(Y=I,y=\lambda_r+1,w=\allones)$ and $(X=(k/n)I,x=0)$ yields Slater points for both primal and dual programs,
hence strong duality holds (see, for instance, \cite{boyd2004convex}). By explicitly writing the complementary slackness conditions, we obtain:

\begin{lemma}\label{lemma:compl_slack}
    If $(Y,y,w)$ and $(X,x)$ are feasible solutions for~\eqref{eq:min_topeigsum_primal}
    and~\eqref{eq:min_topeigsum_dual}, respectively, then they are optimal if and
    only if
    \[
    X(Y + yI - \mathcal{L}(w)) = 0,\quad XY = Y,\quad \innprod{\mathcal{L}^*(X) - x\allones}{w} = 0.
    \]
\end{lemma}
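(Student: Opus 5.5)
Since strong duality holds for the pair \eqref{eq:min_topeigsum_primal}--\eqref{eq:min_topeigsum_dual}, two feasible solutions are optimal if and only if their duality gap is zero. The plan is therefore to compute the gap between primal and dual objectives and write it as a sum of nonnegative inner products. Each term will vanish exactly when the corresponding complementary slackness condition holds.

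The gap is $ky + \tr{Y} - |E|x$. I will use the feasibility constraints to rewrite it. Since $\tr{X} = k$, we have $ky = y\,\tr{X} = \innprod{yI}{X}$. Since $w \in \Delta_E$, we have $|E|x = x\allones^T w = \innprod{x\allones}{w}$. Also $\innprod{\mathcal{L}(w)}{X} = \innprod{w}{\mathcal{L}^*(X)}$. Adding and subtracting $\innprod{\mathcal{L}(w)}{X}$ and $\innprod{Y}{X}$ then gives
\[
ky + \tr{Y} - |E|x = \innprod{X}{yI + Y - \mathcal{L}(w)} + \innprod{I - X}{Y} + \innprod{\mathcal{L}^*(X) - x\allones}{w}.
\]
Each of the three terms is nonnegative:
\begin{itemize}
\item the first is the inner product of two PSD matrices, $X \succeq 0$ and $yI + Y - \mathcal{L}(w) \succeq 0$;
\item the second is likewise an inner product of PSD matrices, $I - X \succeq 0$ and $Y \succeq 0$;
\item the third is the inner product of two entrywise nonnegative vectors, $\mathcal{L}^*(X) - x\allones \geq 0$ and $w \geq 0$.
\end{itemize}
Hence the gap is zero if and only if all three terms vanish.

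To turn vanishing inner products into the matrix equations in the statement, I will use the standard fact that for $A, B \succeq 0$, $\innprod{A}{B} = 0$ if and only if $AB = 0$. To prove it, write $\tr{AB} = \tr{(A^{1/2}B^{1/2})^T(A^{1/2}B^{1/2})} = \|A^{1/2}B^{1/2}\|_F^2$. If this is zero, then $A^{1/2}B^{1/2} = 0$, and multiplying on the left by $A^{1/2}$ and on the right by $B^{1/2}$ gives $AB = 0$. The converse is immediate.

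Applying this fact term by term:
\begin{itemize}
\item the first term vanishes if and only if $X(Y + yI - \mathcal{L}(w)) = 0$;
\item the second vanishes if and only if $(I - X)Y = 0$, which is $XY = Y$;
\item the third is already the scalar condition in the statement.
\end{itemize}
Conversely, if the three conditions hold, taking traces shows that each term is zero, so the gap is zero and both solutions are optimal.

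There is no real obstacle here. The only care needed is in the bookkeeping of the gap decomposition, in particular using $\tr{X} = k$ and $\allones^T w = |E|$ at the right moments.
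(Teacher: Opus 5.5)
Your proof is correct and follows essentially the paper's route: the paper writes the weak-duality chain $|E|x \le \innprod{\mathcal{L}^*(X)}{w} = \innprod{X}{\mathcal{L}(w)} \le \innprod{X}{Y+yI} \le ky + \tr{Y}$, whose three slacks are exactly your three nonnegative terms, and then uses the same fact that positive semidefinite matrices with zero inner product have zero product. Where you go further is in writing the gap identity out explicitly, proving that fact via $\|A^{1/2}B^{1/2}\|_F^2$, and spelling out the converse that the paper calls immediate.
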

\begin{proof}
    We consider the following chain of inequalities:
\[
\begin{split}
|E|x
& = \langle x \allones,w\rangle \\
&\leq \langle \mathcal{L}^*(X),w\rangle \\
&= \langle X,\mathcal{L}(w)\rangle \\
&\leq \langle X,Y+yI\rangle \\
&= y\tr{X} + \innprod{X}{Y}\\
&\leq ky+ \innprod{I}{Y}.
\end{split}
\]
As strong duality holds, at optimal solutions equality must hold throughout the chain. Hence
\[
\innprod{\mathcal{L}^*(X) - x\allones}{w} = 0,\quad \innprod{X}{Y + yI - \mathcal{L}(w)} = 0,\quad
\innprod{I-X}{Y} = 0.
\]
For the latter two equalities, since both matrices in each inner product are positive semidefinite,
their product must be zero, which yields the desired conditions. 
The converse is immediate.
\end{proof}

These conditions allow us to prove the following:

\begin{lemma}\label{lemma:necessity_lemma}
    Suppose $G$ is a graph with unweighted Laplacian $L = \sum_{i=1}^r\lambda_iE_i$,
    with $0 = \lambda_1 < \ldots < \lambda_r$. Fix $j \in [r-1]$, and let
    $k_j = \sum_{i=r-j+1}^r\tr{E_i}$ and $X_j = \sum_{i=r-j+1}^rE_i$.
    Then the following hold:
    \begin{enumerate}[(1)]
        \item If $G$ is upper $k_j$-conformally rigid, then $\mathcal{L}^*(X_j) = \beta\allones$
        for some $\beta > 0$.

        \item If $j \in [r-2]$ and $G$ is both upper $k_{j}$-conformally rigid and
        upper $k_{j+1}$-conformally rigid, then the canonical embedding onto the
        $(r-j)$-th eigenspace of $L$ is edge-isometric. Moreover, $G$ is also upper
        $k$-conformally rigid for all intermediate $k_{j} < k < k_{j+1}$.
    \end{enumerate}
    Analogous versions of these results hold for lower conformal rigidity.
\end{lemma}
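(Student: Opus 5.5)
The plan is to work entirely through the SDP pair \eqref{eq:min_topeigsum_primal}--\eqref{eq:min_topeigsum_dual} and the complementary slackness conditions of Lemma~\ref{lemma:compl_slack}.

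For (1), upper $k_j$-conformal rigidity means $w=\allones$ is optimal for $\min_{w\in\Delta_E}S_{k_j}(w)$. I would complete $w=\allones$ to an optimal primal triple $(Y,y,\allones)$ by taking $y=\lambda_{r-j+1}$ and $Y=\sum_{i=r-j+1}^r(\lambda_i-y)E_i$. This triple is feasible, since $yI+Y-L\succeq 0$, and its objective equals $S_{k_j}(\allones)$. Now let $(X,x)$ be any dual optimum, which exists by strong duality. From $X(yI+Y-L)=0$, the range of $X$ lies in the kernel of $yI+Y-L$. That kernel is exactly the span of the top $j$ eigenspaces, because on the lower eigenspaces the matrix acts as $y-\lambda_i>0$. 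Since $\tr{X}=k_j$ equals the dimension of this span and $0\preceq X\preceq I$, we get $X=X_j$.

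Finally, the condition $\innprod{\mathcal{L}^*(X)-x\allones}{\allones}=0$, together with $\mathcal{L}^*(X)\ge x\allones$, forces $\mathcal{L}^*(X_j)=x\allones$. We have $x>0$ because $|E|x=S_{k_j}(\allones)>0$. Set $\beta:=x$.

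For the first part of (2), apply (1) to both $j$ and $j+1$. Since $X_{j+1}=X_j+E_{r-j}$, we get $\mathcal{L}^*(E_{r-j})=(\beta_{j+1}-\beta_j)\allones$. Positivity of this constant holds because $E_{r-j}\neq 0$ and $G$ is connected. Indeed, $\sum_{ab}\mathcal{L}^*(E_{r-j})_{ab}=\tr{LE_{r-j}}=\lambda_{r-j}\tr{E_{r-j}}>0$.

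For the intermediate claim, fix $k=k_j+t$ with $0<t<m:=\tr{E_{r-j}}$. I would exhibit a dual feasible point whose objective equals $S_k(\allones)$, which certifies optimality of $\allones$ by weak duality. The natural candidate is $X=X_j+(t/m)E_{r-j}$. This lies in $\mathcal{X}_k$ and satisfies $\mathcal{L}^*(X)=(\beta_j+(t/m)\gamma)\allones$ with $\gamma=\beta_{j+1}-\beta_j$. Take $x=\beta_j+(t/m)\gamma$. It remains to check that $|E|x=S_k(\allones)$. We have $|E|\beta_j=\innprod{\allones}{\mathcal{L}^*(X_j)}=\tr{LX_j}=S_{k_j}(\allones)$, and similarly $|E|\gamma=\lambda_{r-j}m$. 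Hence $|E|x=S_{k_j}(\allones)+t\lambda_{r-j}=S_k(\allones)$.

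The lower versions follow by the same argument applied to the analogous SDP for $\max s_k$ over $\mathcal{Z}_k$.

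The main obstacle is the uniqueness step in (1): pinning down $X=X_j$ from complementary slackness. This needs a careful choice of the optimal primal $(Y,y)$ so that the kernel of $yI+Y-L$ has dimension exactly $k_j$, and the argument hinges on the fact that $\allones$ is primal optimal. The remaining steps are bookkeeping.
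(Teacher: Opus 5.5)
Your proof is correct and follows essentially the same route as the paper: SDP duality with complementary slackness at $w=\allones$ for (1), and the dual certificate $X_j+(t/m)E_{r-j}$ with weak duality for the intermediate values of $k$ in (2). There are two small differences. You pin down $X=X_j$ directly from $X(yI+Y-L)=0$ using the well-chosen primal optimum $y=\lambda_{r-j+1}$, whereas the paper cites uniqueness of the Ky Fan maximizer once $\innprod{L}{X}=S_{k_j}(\allones)$ is known. You also explicitly justify $\gamma=\lambda_{r-j}\tr{E_{r-j}}/|E|>0$, which the paper leaves implicit.
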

\begin{proof}
    For~(1), assume that $G$ is upper $k_j$-conformally rigid, so $w = \allones$
    attains the minimum of $S_{k_j}(w)$ over $\Delta_E$. Combining this with
    the definition of~\eqref{eq:min_topeigsum_dual}, we get that if $(X,x)$ is
    optimal, then
    \[
    S_{k_j}(\allones) = \min_{w \in \Delta_E}S_{k_j}(w) = |E|x = \innprod{x\allones}{\allones},
    \]
    and as Lemma~\ref{lemma:compl_slack} gives $\mathcal{L}^*(X) = x\allones$, this implies
    $S_{k_j}(\allones) = \innprod{L}{X}$. From the observations at the beginning
    of this section, the unique optimal matrix attaining $S_{k_j}(\allones)$
    in~\eqref{eq:max_k_eig_sdp} is exactly $X_j = \sum_{i=r-j+1}^r E_i$, thus
    $X = X_j$, and setting $\beta = x$ gives $\mathcal{L}^*(X_j) = \beta\allones$, as desired.

    For~(2), assume $G$ is upper $k_{j}$-conformally rigid and upper $k_{j+1}$-conformally rigid.
    The complementary slackness conditions of Lemma~\ref{lemma:compl_slack} combined with the fact that $w = \allones > 0$
    guarantees the existence of dual solutions $(X_{j}, x_{j})$ and $(X_{j+1}, x_{j+1})$
    for~\eqref{eq:min_topeigsum_dual} such that:
    \[
    \begin{split}
        \mathcal{L}^*(X_{j}) &= x_{j}\allones, \quad \text{with } |E|x_{j} = S_{k_{j}}(\allones),\\
        \mathcal{L}^*(X_{j+1}) &= x_{j+1}\allones, \quad \text{with }
        |E|x_{j+1} = S_{k_{j+1}}(\allones).
    \end{split}
    \]
    We note that $x_j = \innprod{L}{X_j}/|E|$, and since $E_{r-j} = X_{j+1} - X_j$,
    setting $\gamma = x_{j+1} - x_j > 0$ gives $\mathcal{L}^*(E_{r-j}) = \gamma\allones$,
    which proves the first claim. Now let $k$ be an integer with $k_{j} < k < k_{j+1}$.
    We can write $k = k_j + \alpha\tr{E_{r-j}}$, where
    $\alpha = (k - k_j)/\tr{E_{r-j}} \in (0,1)$, and define
    \[
    X_{\alpha} = \sum_{i=r-j+1}^rE_i + \alpha E_{r-j}\quad\text{and}\quad
    x_\alpha = \frac{\innprod{L}{X_\alpha}}{|E|}.
    \]
    One can check that $(X_\alpha, x_\alpha)$ is feasible for~\eqref{eq:min_topeigsum_dual}.
    Since this dual program is a maximization problem, we have for any
    $w \in \Delta_E$:
    \[
    S_k(w) \geq |E|x_\alpha = S_{k_j}(\allones) + \alpha\tr{E_{r-j}}\lambda_{r-j}
    = S_k(\allones),
    \]
    hence $S_k(w) \geq S_k(\allones)$ for all $w \in \Delta_E$, proving that
    $G$ is upper $k$-conformally rigid.
\end{proof}

With this auxiliary Lemma, we can prove the main result of this section:
\begin{theorem}\label{thm:total_rigid_char}
    A graph $G$ is totally conformally rigid if and only if it is edge-rigid.
\end{theorem}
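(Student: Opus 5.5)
We prove the two directions separately, using the SDP pair \eqref{eq:min_topeigsum_primal}--\eqref{eq:min_topeigsum_dual} and Lemma~\ref{lemma:necessity_lemma}. We treat the upper case ($S_k$) in detail; the lower case ($s_k$) is symmetric, using \eqref{eq:min_k_eig_sdp}, the set $\mathcal{Z}_k$, and the analogous dual.

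\emph{Edge-rigid $\Rightarrow$ totally conformally rigid.} Assume $\mathcal{L}^*(E_i)=\gamma_i\allones$ for every $i\ge 2$. Fix $k\in[n-1]$ and take the optimal solution of \eqref{eq:max_k_eig_sdp} at $w=\allones$. Choose $j$ with $k_j\le k<k_{j+1}$, where $k_0=0$. Set
\[
X=X_j+\alpha E_{r-j},\qquad \alpha=(k-k_j)/\tr{E_{r-j}}\in[0,1).
\]
Since $k\le n-1$, the index $r-j\ge 2$, so $E_1$ is never used. Then $X\in\mathcal{X}_k$, and by edge-rigidity $\mathcal{L}^*(X)=x\allones$ with $x=\sum_{i>r-j}\gamma_i+\alpha\gamma_{r-j}$. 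So $(X,x)$ is feasible for \eqref{eq:min_topeigsum_dual}, and weak duality gives $S_k(w)\ge |E|x$ for every $w\in\Delta_E$. Finally,
\[
|E|x=\innprod{\allones}{\mathcal{L}^*(X)}=\innprod{L}{X}=S_k(\allones).
\]
Hence $\allones$ minimizes $S_k$. The lower case is identical with $Z$ built from the smallest nontrivial eigenprojectors $E_2,E_3,\ldots$: feasibility of $(Z,z)$ in the dual of the max-$s_k$ problem (constraint $\mathcal{L}^*(Z)\le z\allones$) gives $s_k(w)\le |E|z=s_k(\allones)$.

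\emph{Totally conformally rigid $\Rightarrow$ edge-rigid.} Assume upper $k$-conformal rigidity for all $k$.
\begin{itemize}
\item For $j\in[r-2]$, part (2) of Lemma~\ref{lemma:necessity_lemma}, applied to the consecutive values $k_j,k_{j+1}$, gives $\mathcal{L}^*(E_{r-j})=\gamma\allones$. This covers the eigenprojectors $E_{r-1},\dots,E_2$.
\item For the top eigenspace $E_r=X_1$, part (1) with $j=1$ gives $\mathcal{L}^*(E_r)=\beta\allones$.
\end{itemize}
Thus every $E_i$ with $i\ge2$ is edge-isometric. Positivity of each constant holds because $\innprod{\allones}{\mathcal{L}^*(E_i)}=\lambda_i\tr{E_i}>0$.

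\emph{Main obstacle.} This is the index bookkeeping in part (2) of Lemma~\ref{lemma:necessity_lemma}. It requires $k_{j+1}$ to be a legitimate value in $[n-1]$, which is why it is restricted to $j\le r-2$, so $k_{r-1}=n-1$. We must confirm that the range $j\in[r-2]$ together with $j=1$ in part (1) indeed exhausts $E_2,\dots,E_r$. If $r=2$ there are no middle projectors, and part (1) alone suffices.

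A secondary check, in the sufficiency direction, is that the dual feasibility of the constructed $X$ needs only $\mathcal{L}^*(X)\ge x\allones$, which equality trivially satisfies. So lower rigidity alone could also be used for necessity, but the upper version is enough.
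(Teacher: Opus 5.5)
Your proof is correct. The necessity direction is exactly the paper's: part (1) of Lemma~\ref{lemma:necessity_lemma} with $j=1$ handles $E_r$, and part (2) for $j\in[r-2]$ handles $E_{r-1},\dots,E_2$.

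You differ from the paper in the sufficiency direction. The paper works only at the breakpoints $k_j$. There it exhibits the primal solution $(Y_j,y_j,\allones)$ alongside $(X_j,x_j)$ and verifies the complementary slackness conditions of Lemma~\ref{lemma:compl_slack}. It then appeals to the interpolation in part (2) of Lemma~\ref{lemma:necessity_lemma} for intermediate $k$, and to the upper/lower complementation for $s_k$.

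You instead take, for every $k$ at once, the interpolated certificate $X_j+\alpha E_{r-j}$. Its image under $\mathcal{L}^*$ is constant by edge-rigidity, so the one-line weak-duality bound $S_k(w)\ge\innprod{w}{\mathcal{L}^*(X)}=|E|x=S_k(\allones)$ settles the upper case, with no primal solution or complementary slackness needed. This is the same device as in the second half of the lemma's proof, applied uniformly. It gains two things:
\begin{itemize}
\item With $k_0=0$ it explicitly covers $0<k<\tr{E_r}$ (nonempty when $\lambda_r$ is repeated). Part (2) of the lemma is stated only for $j\ge1$, so the paper's appeal to it does not literally reach that range.
\item It treats $s_k$ directly, rather than through the complementation (equivalently, $S_k(w)+s_{n-1-k}(w)=2|E|$ on $\Delta_E$) that the paper relies on.
\end{itemize}

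One small inaccuracy: with your convention $k_j\le k<k_{j+1}$, the case $k=n-1$ gives $j=r-1$, so $r-j=1$ and your claim $r-j\ge 2$ fails. This is harmless, since then $\alpha=0$, and in any case $\mathcal{L}^*(E_1)=0$.
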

\begin{proof}
    Let $0 = \lambda_1 < \ldots < \lambda_r$ be the distinct eigenvalues of $L$,
    fix $j \in [r-1]$, and let $k_j = \sum_{i=r-j+1}^r\tr{E_i}$ and
    $X_j = \sum_{i=r-j+1}^rE_i$. For the forward direction, we apply
    Lemma~\ref{lemma:necessity_lemma} for each $k_j$ with $j \in [r-1]$ to
    conclude edge-rigidity.

    For the backward direction, assume $\mathcal{L}^*(E_i) = \gamma_i\allones$ with
    $\gamma_i > 0$. Define
    \[
    x_j = \sum_{i=r-j+1}^r\gamma_i,\quad Y_j = \sum_{i=r-j+1}^{r}(\lambda_i -
    \lambda_{r-j})E_i,\quad y_j = \lambda_{r-j}.
    \]
    One can easily verify that $(X_j, x_j)$ and $(Y_j, y_j, \allones)$ satisfy the
    conditions of Lemma~\ref{lemma:compl_slack}, and thus form a pair of
    primal-dual optimal solutions. Since their objective value is $S_{k_j}(\allones)$,
    this implies that $G$ is upper $k_j$-conformally rigid for all $j \in [r-1]$.
    Combining this with Lemma~\ref{lemma:necessity_lemma} allows us to conclude
    that $G$ is $k$-conformally rigid for all $k \in [n-1]$, implying that $G$ is
    totally conformally rigid.
\end{proof}

Combining Theorems~\ref{thm:cospectral_rigid_char} and~\ref{thm:total_rigid_char}
gives us a four-way equivalence:

\begin{corl} \label{corl:equiv_rigid_cospectral}
    For a graph $G$, the following are equivalent:
    \begin{enumerate}[(a)]
        \item All edges of $G$ are pairwise Laplacian-cospectral.
        \item $G$ is edge-rigid.
        \item $G$ is totally conformally rigid.
        \item For every Laplacian eigenprojector $E_i$, the vector $\mathcal{L}^*(E_i)$ is
        constant.
    \end{enumerate}
    \hfill$\blacksquare$
\end{corl}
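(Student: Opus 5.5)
The corollary follows by chaining the equivalences already established, so the plan is mostly bookkeeping. I will organise the argument around condition (b) as the hub. The equivalence (a)$\Leftrightarrow$(b) is Theorem~\ref{thm:cospectral_rigid_char}, and (b)$\Leftrightarrow$(c) is Theorem~\ref{thm:total_rigid_char}. It then remains to relate (d) to (b).

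For (b)$\Rightarrow$(d), edge-rigidity says that for every eigenprojector $E_i$ with $i\ge 2$ we have $\mathcal{L}^*(E_i)=\gamma_i\allones$, which is constant. For the trivial projector $E_1=(1/n)\allones\allones^T$ we compute $\mathcal{L}^*(E_1)_{ab}=1/n+1/n-2/n=0$, as remarked after the definition of edge-rigidity. So (d) holds for every $i$, including $i=1$.

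For (d)$\Rightarrow$(b), suppose $\mathcal{L}^*(E_i)=\gamma_i\allones$ for some real $\gamma_i$. We must show $\gamma_i>0$ for $i\ge 2$. Since $E_i\succeq 0$, each entry $\mathcal{L}^*(E_i)_{ab}=\|E_iz_{ab}\|^2$ is nonnegative, so $\gamma_i\ge 0$. Suppose $\gamma_i=0$. Then $E_iz_{ab}=0$ for every edge $ab$. Equivalently, every vector $u$ in the $\lambda_i$-eigenspace satisfies $u_a=u_b$ on every edge. Because $G$ is connected, such a $u$ is a multiple of $\allones$. But $u$ is orthogonal to $\allones$ when $\lambda_i>0$, so $u=0$. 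This contradicts $E_i\neq 0$, hence $\gamma_i>0$.

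The only step that is not a direct citation is the positivity argument just given, and it is straightforward. An alternative route to the same positivity is the trace identity $\langle L,E_i\rangle=\sum_{ab}\mathcal{L}^*(E_i)_{ab}=|E|\gamma_i$. Since $\langle L,E_i\rangle=\lambda_i m_i>0$, this again forces $\gamma_i>0$.
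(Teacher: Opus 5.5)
Your proposal is correct and is essentially the paper's argument: the corollary is just Theorem~\ref{thm:cospectral_rigid_char} chained with Theorem~\ref{thm:total_rigid_char}, and the proof of the former actually establishes (a)$\Leftrightarrow$(d). Your check that a constant $\mathcal{L}^*(E_i)$ forces $\gamma_i>0$ for $i\ge 2$ fills in a step the paper leaves implicit when it identifies (d) with (b); either your connectivity argument or $|E|\gamma_i=\langle L,E_i\rangle=\lambda_i\tr{E_i}>0$ suffices.
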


We conclude this section with an observation: in the proofs above, we only used
upper $k$-conformal rigidity for every $k$, and in fact it is immediate to notice that a graph is upper $k$-conformally rigid if and only if it is lower $(n-1-k)$-conformally rigid,
for $k \in [n-2]$.

\begin{prop}
A graph $G$ is upper $k$-conformally rigid for all $k \in [n-1]$ if and only if
it is lower $k$-conformally rigid for all $k \in [n-1]$.
\hfill$\blacksquare$
\end{prop}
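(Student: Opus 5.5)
The plan is to use the identity $s_k(w) + S_{n-1-k}(w) = \tr{\mathcal{L}(w)}$ together with the fact that the trace is constant on $\Delta_E$. Indeed,
\[
\tr{\mathcal{L}(w)} = \sum_{ab\in E} w_{ab}\tr{L_{ab}} = 2\,\allones^Tw = 2|E|
\]
for every $w \in \Delta_E$.

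For $k \in [n-2]$, the nontrivial eigenvalues $\lambda_2(w),\ldots,\lambda_n(w)$ split into the $k$ smallest and the $n-1-k$ largest. Since $\lambda_1(w)=0$, this gives $s_k(w) + S_{n-1-k}(w) = 2|E|$. Hence $s_k(w)\le s_k(\allones)$ holds for all $w\in\Delta_E$ if and only if $S_{n-1-k}(\allones)\le S_{n-1-k}(w)$ holds for all $w\in\Delta_E$. In words, $G$ is lower $k$-conformally rigid exactly when it is upper $(n-1-k)$-conformally rigid. This is the observation made just before the statement. As $k$ ranges over $[n-2]$, so does $n-1-k$, so upper rigidity for all $k\in[n-2]$ is equivalent to lower rigidity for all $k\in[n-2]$.

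The only remaining case is $k=n-1$, where both sides are trivial. We have $s_{n-1}(w)=S_{n-1}(w)=\tr{\mathcal{L}(w)}=2|E|$, which is constant on $\Delta_E$, so $\allones$ is simultaneously a maximizer and a minimizer. Thus $G$ is always both upper and lower $(n-1)$-conformally rigid, and this case contributes nothing to either side of the equivalence.

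No step is a real obstacle. The only points needing care are the boundary index $k=n-1$, which is why it is handled separately, and the role of the zero eigenvalue $\lambda_1(w)$. That eigenvalue is zero for every $w\in\Delta_E$, even when $w$ has zero entries and $\mathcal{L}(w)$ has higher-dimensional kernel, because $\mathcal{L}(w)\allones=0$. So the complementary sum is exactly the sum of all nontrivial eigenvalues, which equals the trace.
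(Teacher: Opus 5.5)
Your proof is correct and follows the paper's approach. The paper gives no written proof and calls the statement immediate from the remark that upper $k$-conformal rigidity is equivalent to lower $(n-1-k)$-conformal rigidity for $k \in [n-2]$. Your identity $s_k(w) + S_{n-1-k}(w) = 2|E|$ on $\Delta_E$, together with your separate treatment of the trivial case $k = n-1$, supplies exactly the details the paper omits.
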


\section{Laplacian Powers and Line Graphs}\label{sec:laplacian_walks}

In this section, we give a combinatorial characterization of total conformal
rigidity in terms of powers of the Laplacian matrix.

\begin{theorem}\label{thm:total_walk_char}
A graph $G$ is edge-rigid if and only if for every integer $\ell \ge 0$, there
exists a constant $C_\ell \in \fld{R}$ such that
\[
\mathcal{L}^*(L^\ell)_{ab} = L^\ell_{aa} + L^\ell_{bb} - 2L^\ell_{ab} = C_\ell
\quad \text{for every } ab \in E.
\]
\end{theorem}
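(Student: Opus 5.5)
The plan is to pass between the spectral idempotents $E_i$ and the powers $L^\ell$ through the linear map $\mathcal{L}^*$. Both directions rest on the fact that $\{L^\ell\}_{\ell=0}^{r-1}$ and $\{E_i\}_{i=1}^r$ span the same commutative algebra. Here $L=\sum_{i=1}^r\lambda_iE_i$ with $0=\lambda_1<\dots<\lambda_r$, and Corollary~\ref{corl:equiv_rigid_cospectral}(d) says edge-rigidity means that $\mathcal{L}^*(E_i)$ is a constant vector for every $i$.

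For the forward direction, write $L^\ell=\sum_{i=1}^r\lambda_i^\ell E_i$, with the convention $\lambda_1^0=1$ when $\ell=0$. Apply the linear map $\mathcal{L}^*$ to get
\[
\mathcal{L}^*(L^\ell)=\sum_{i=1}^r\lambda_i^\ell\,\mathcal{L}^*(E_i).
\]
We have $\mathcal{L}^*(E_1)=0$, since $z_{ab}\perp\allones$, and $\mathcal{L}^*(E_i)=\gamma_i\allones$ for $i\ge2$. Hence $\mathcal{L}^*(L^\ell)=C_\ell\allones$ with $C_\ell=\sum_{i\ge2}\lambda_i^\ell\gamma_i$. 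For $\ell=0$ this gives $C_0=2$, which is also clear directly.

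For the backward direction, express each $E_i$ as a polynomial in $L$ via Lagrange interpolation:
\[
E_i=\prod_{j\neq i}\frac{L-\lambda_jI}{\lambda_i-\lambda_j}=\sum_{\ell=0}^{r-1}c_{i\ell}L^\ell .
\]
By linearity,
\[
\mathcal{L}^*(E_i)=\Big(\sum_{\ell}c_{i\ell}C_\ell\Big)\allones,
\]
so $\mathcal{L}^*(E_i)$ is constant for every $i$. Corollary~\ref{corl:equiv_rigid_cospectral} then gives edge-rigidity.

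The one point needing care is that the definition of edge-rigidity asks for $\gamma_i>0$, not merely constancy. This follows because $\gamma_i=z_{ab}^TE_iz_{ab}\ge0$, and summing over all edges gives
\[
|E|\gamma_i=\innprod{L}{E_i}=\lambda_i\tr{E_i}>0 \quad\text{for } i\ge2.
\]
Corollary~\ref{corl:equiv_rigid_cospectral} already absorbs this, so I expect no real obstacle; the argument is essentially linear algebra. Finally, since the $C_\ell$ for $\ell\ge r$ are determined by those with $\ell<r$, only $\ell\le r-1\le n-1$ needs to be checked.
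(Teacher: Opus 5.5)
Your proof is correct and follows the paper's argument. The paper inverts the Vandermonde system $\sum_i \lambda_i^\ell \mathcal{L}^*(E_i) = C_\ell\allones$ for $\ell = 0,\ldots,r-1$, and your Lagrange interpolation formula for $E_i$ is exactly that inversion written out explicitly. Your extra check that $\gamma_i = \lambda_i\tr{E_i}/|E| > 0$ for $i \ge 2$ also supplies the positivity required by the definition of edge-rigidity, which the paper's proof leaves implicit.
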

\begin{proof}
For the forward direction, assume $G$ is edge-rigid. The orthogonal projectors
onto the eigenspaces of $L$ satisfy $\mathcal{L}^*(E_i) = \gamma_i \allones$ for some
$\gamma_i > 0$ and each $i \in \{2,\ldots,r\}$. Using the spectral decomposition
$L = \sum_{i=1}^r \lambda_i E_i$, we can write
\[
  L^\ell = \sum_{i=1}^r \lambda_i^\ell E_i.
\]
Applying the linear operator $\mathcal{L}^*$ yields
\[
  \mathcal{L}^*(L^\ell) = \sum_{i=1}^r \lambda_i^\ell \mathcal{L}^*(E_i)
  = \left( \sum_{i=1}^r \lambda_i^\ell \gamma_i \right) \allones.
\]
Noting that $\lambda_1 = 0$ and setting $C_\ell := \sum_{i=2}^r \lambda_i^\ell \gamma_i$ gives the desired
implication.

For the reverse direction, assume $\mathcal{L}^*(L^\ell) = C_\ell \allones$ for every
integer $\ell \ge 0$. By applying $\mathcal{L}^*$ to the spectral decomposition, we obtain
a linear system for $\ell \in \{0, 1, \ldots, r-1\}$:
\[
  \sum_{i=1}^r \lambda_i^\ell \mathcal{L}^*(E_i) = C_\ell \allones.
\]
Because the $r$ eigenvalues $\lambda_1, \ldots, \lambda_r$ are distinct,
the coefficient matrix of this system is an $r \times r$ Vandermonde matrix, which
is invertible. Consequently, each vector $\mathcal{L}^*(E_i) \in \nnegreal{E}$ can be
uniquely expressed as a linear combination of the constant vectors
$\{C_0\allones, C_1\allones, \ldots, C_{r-1}\allones\}$. This implies that
$\mathcal{L}^*(E_i) = \gamma_i\allones$ for some $\gamma_i \in \fld{R}$ and every
$i \in \{2,\ldots,r\}$, hence $G$ is edge-rigid.
\end{proof}

By noting that a walk-regular graph must also have $L^\ell$ with constant diagonals for all $\ell \geq 0$,
and also that such Laplacian powers must also be constant on the entries corresponding to edges for $1$-walk-regular graphs,
we can conclude the following:
\begin{corl}\label{corl:walk_reg_edge_rigid_1wr}
    If $G$ is walk-regular, then it is edge-rigid if and only if it is
    $1$-walk-regular.
    
    \hfill$\blacksquare$
\end{corl}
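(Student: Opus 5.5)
Since $G$ is walk-regular, it is regular, say of valency $d$. Indeed $(A^2)_{aa}=\deg(a)$, so constancy of the diagonal of $A^2$ forces this. Hence $L=dI-A$, and the proof reduces to comparing powers of $L$ with powers of $A$ via Theorem~\ref{thm:total_walk_char}. Recall that $G$ is walk-regular if $(A^\ell)_{aa}$ is independent of $a$ for every $\ell\ge 0$. It is $1$-walk-regular if, in addition, $(A^\ell)_{ab}$ is independent of the edge $ab$ for every $\ell\ge 0$.

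First we transfer constancy between $A$ and $L$. Since $L^\ell=\sum_{t=0}^{\ell}\binom{\ell}{t}d^{\ell-t}(-1)^tA^t$, and conversely $A^\ell=(dI-L)^\ell$ expands in the same way, each power of $L$ is a polynomial in $A$ and each power of $A$ is a polynomial in $L$. So the following hold:
\begin{enumerate}[(i)]
\item $L^\ell$ has constant diagonal for all $\ell$ if and only if $A^\ell$ does.
\item $L^\ell$ is constant on edge entries for all $\ell$ if and only if $A^\ell$ is.
\end{enumerate}
Walk-regularity therefore gives $L^\ell_{aa}=D_\ell$ for all vertices $a$, with $D_\ell$ independent of $a$. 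Consequently, for every edge $ab$,
\[
\mathcal{L}^*(L^\ell)_{ab}=2D_\ell-2L^\ell_{ab}.
\]

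For the reverse direction, assume $G$ is $1$-walk-regular. Then $L^\ell_{ab}$ is constant over edges, so $\mathcal{L}^*(L^\ell)$ is a constant vector for every $\ell\ge 0$. Theorem~\ref{thm:total_walk_char} then shows that $G$ is edge-rigid.

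For the forward direction, assume $G$ is edge-rigid. Theorem~\ref{thm:total_walk_char} gives $\mathcal{L}^*(L^\ell)=C_\ell\allones$ for every $\ell\ge 0$. By the displayed identity, $L^\ell_{ab}=D_\ell-C_\ell/2$ for every edge $ab$, which is constant. By (ii), $A^\ell_{ab}$ is then constant over edges for every $\ell$. Together with walk-regularity, this is exactly $1$-walk-regularity.

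The only point requiring care is the opening observation that walk-regularity implies regularity, so that $L=dI-A$ and the polynomial translation in (i) and (ii) is valid. This holds because $(A^2)_{aa}=\deg(a)$. The remaining steps are routine bookkeeping.
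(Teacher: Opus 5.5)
Your proof is correct and follows the paper's own terse argument. In both, you translate walk-regularity and $1$-walk-regularity from powers of $A$ to powers of $L$ and then invoke Theorem~\ref{thm:total_walk_char}; you also make explicit two steps the paper leaves implicit: that walk-regularity forces regularity via $(A^2)_{aa}=\deg(a)$ (so $L=dI-A$), and the translation from $L^\ell$ back to $A^\ell$ on edge entries needed for the direction edge-rigid $\Rightarrow$ $1$-walk-regular.
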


\begin{exemp}
    There are edge-rigid graphs that are not edge-transitive.
    Indeed, every strongly regular graph is $1$-walk-regular, and by the Corollary
    above every strongly regular graph is edge-rigid.
    Consequently, any strongly regular graph that is not edge-transitive gives an
    example.
    For instance, the Chang graphs~\cite{Chang1959} on 28 vertices are strongly
    regular with parameters $(28,12,6,4)$ and are not edge-transitive.
\end{exemp}

\begin{figure}[htbp]
     \centering
     \includegraphics[width=\textwidth]{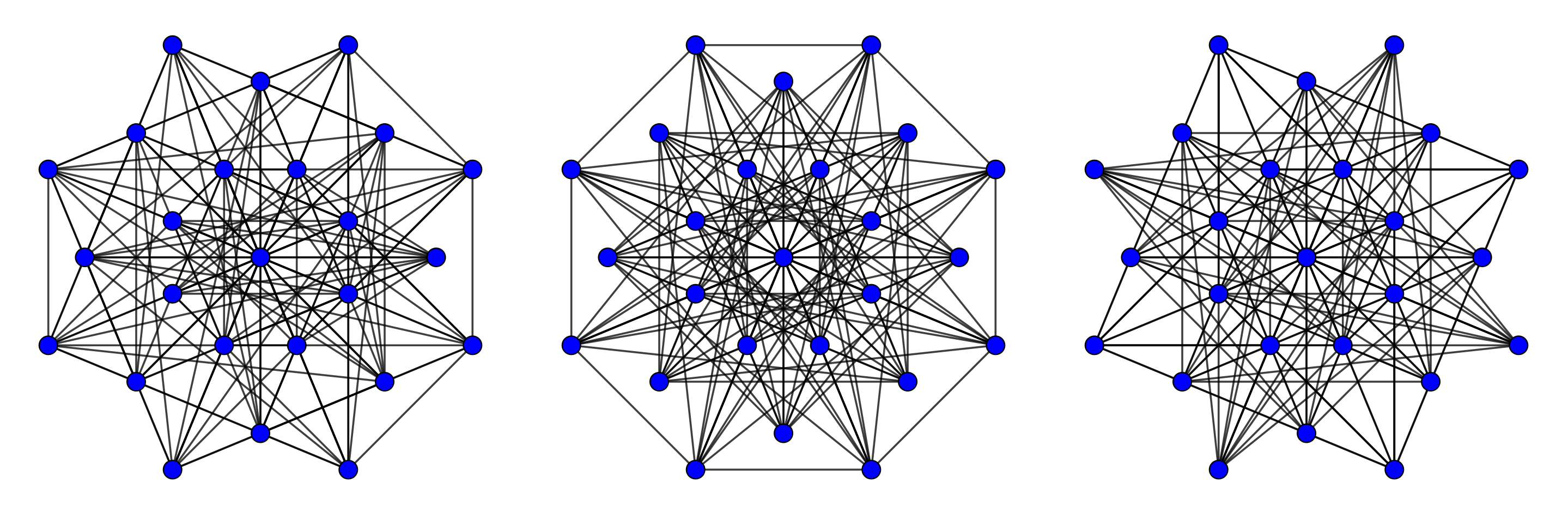}
     \caption{The three Chang graphs~\cite{Chang1959}, which are edge-rigid but
     not edge-transitive.}
     \label{fig:chang}
\end{figure}

In~\cite{steinerberger2025conformallyrigidgraphs}, the authors utilize semidefinite programming to numerically test and certify wether a graph is conformally rigid. While semidefinite programs can be solved to within any prescribed tolerance in polynomial time~\cite{Nesterov1994-fr}, this continuous optimization approach only provides an approximate numerical verification rather than an exact combinatorial decision procedure, and in practice it is susceptible to numerical instability and precision artifacts on larger instances.

An immediate and significant consequence of Theorem~\ref{thm:total_walk_char} is that total conformal rigidity can be decided exactly in deterministic polynomial time via a straightforward algebraic algorithm that requires only integer arithmetic. This is achieved by computing $\mathcal{L}^*(L^\ell)$ for all $\ell \in [n-1]$ and checking whether each resulting vector is constant. Because the eigenvalues of the unweighted Laplacian $L$ are bounded above by $2n$, the entries of $L^\ell$ for $\ell \in [n-1]$ are integers bounded in absolute value by $(2n)^{n-1}$. Consequently, the arithmetic computations involve integers whose bit-length grows at most linearly with $n \log n$. Since computing the sequence of matrix powers $L, L^2, \ldots, L^{n-1}$ requires $O(n^4)$ elementary arithmetic operations, the entire procedure can be implemented exactly without numerical error in polynomial time with respect to the bit-complexity model.

\begin{corl}\label{corl:poly_time}
    If $G=(V,E)$ is a graph on $n$ vertices, then there is a deterministic polynomial-time algorithm for deciding if $G$ is edge-rigid using $O(n^4)$ arithmetic operations over integers of bit-length $O(n \log n)$.
    \hfill$\blacksquare$
\end{corl}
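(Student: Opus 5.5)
The plan is to reduce the decision problem to Theorem~\ref{thm:total_walk_char}, but to check only finitely many powers. First we show that it suffices to test $\ell \in \{0,1,\ldots,n-1\}$. Let $r \le n$ be the number of distinct eigenvalues of $L$. In the reverse direction of Theorem~\ref{thm:total_walk_char}, only the equations for $\ell \in \{0,\ldots,r-1\}$ were used: the Vandermonde system in the unknown vectors $\mathcal{L}^*(E_1),\ldots,\mathcal{L}^*(E_r)$ is invertible. Its solution is therefore a linear combination of the constant vectors $C_0\allones,\ldots,C_{r-1}\allones$, and hence is constant. Since $r-1 \le n-1$, constancy of $\mathcal{L}^*(L^\ell)$ for all $\ell \in \{0,\ldots,n-1\}$ implies edge-rigidity, and the converse is the forward direction of Theorem~\ref{thm:total_walk_char}. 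The case $\ell=0$ is free, because $\mathcal{L}^*(I)=2\allones$. A slightly subtle point is that $r$ is not computed: we use the bound $r\le n$ and test every $\ell\in[n-1]$.

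The algorithm is as follows. Form the integer matrix $L$, compute $L^{\ell} = L\cdot L^{\ell-1}$ iteratively for $\ell=1,\ldots,n-1$, and for each $\ell$ evaluate $L^\ell_{aa}+L^\ell_{bb}-2L^\ell_{ab}$ over all $ab\in E$. Accept if and only if every resulting vector is constant. Each matrix product costs $O(n^3)$ integer operations with the schoolbook method, so all powers together cost $O(n^4)$. The checks cost $O(n|E|)=O(n^3)$ additions and comparisons in total.

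The main obstacle is bounding the bit-length, which I would handle through norms. Since $L$ is symmetric and positive semidefinite with $\lambda_n \le 2\Delta \le 2n$ (by Gershgorin, or by the standard bound $\lambda_n\le 2\max\deg$), we have $\|L^\ell\|_2=\lambda_n^\ell\le(2n)^\ell$. Every entry is bounded by the spectral norm, so $|L^\ell_{ab}|\le(2n)^{n-1}$. The entries of the vectors $\mathcal{L}^*(L^\ell)$ are then at most $4(2n)^{n-1}$. Every integer ever stored therefore has bit-length $O(n\log n)$. Intermediate sums inside a matrix product are partial sums of at most $n$ terms, each bounded by $(2n)^{2(n-1)}$, which gives the same order of bit-length.

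Combining these steps yields a deterministic procedure. It performs $O(n^4)$ arithmetic operations on integers of bit-length $O(n\log n)$, hence runs in polynomial time in the bit model, and it is correct by Theorem~\ref{thm:total_walk_char} together with the reduction to $\ell\le n-1$.
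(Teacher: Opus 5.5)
Your proposal is correct and follows the paper's argument. You compute $L,L^2,\ldots,L^{n-1}$ by repeated integer multiplication ($O(n^4)$ operations), test constancy of each $\mathcal{L}^*(L^\ell)$, and bound entries by $\lambda_n^\ell\le(2n)^{n-1}$ to get $O(n\log n)$-bit integers; you also make explicit what the paper leaves implicit, namely that the Vandermonde step in Theorem~\ref{thm:total_walk_char} only uses $\ell\le r-1\le n-1$, so checking these finitely many powers suffices.
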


The structural constraints imposed by edge-rigidity can be translated into the
language of line graphs. To make this precise, fix an arbitrary orientation of the
edges of $G$, and construct the oriented incidence matrix $B \in \fld{R}^{V \times E}$
such that the column corresponding to an edge $e = \{u,v\}$ directed from $v$ to
$u$ is $z_e = e_u - e_v$. While the Laplacian $L = BB^T$ is independent of this
orientation, the product $B^T B \in \fld{R}^{E \times E}$ depends on the choice of orientation.
The diagonal entries of $B^T B$ are $z_e^T z_e = 2$. For distinct
edges $e$ and $f$, the entry $(B^T B)_{ef}$ is $\pm 1$ if they share a vertex,
and $0$ otherwise. Thus, we may write $B^T B = 2I + A_\sigma$, where $A_\sigma$
is the adjacency matrix of a signed line graph.

The exact signature $\sigma$ depends on the chosen orientation of $G$. However,
reversing the orientation of an edge $e \in E$ simply negates the column $z_e$
in $B$. This operation conjugates $B^T B$ by a diagonal matrix $D$ with
$D_{ee} = -1$ and $D_{ff} = 1$ for all $f \neq e$. In the context of signed
graphs, this corresponds exactly to a vertex switching in the signed line graph.
Since switching-equivalent signed graphs have similar adjacency matrices, the
diagonal entries of $(A_\sigma^\ell)_{ee}$ are invariant under the choice of
orientation for every $\ell \geq 0$. We can thus prove the following:

\begin{corl}
A graph $G$ is edge-rigid if and only if, for any orientation of its edges, the
corresponding signed line graph of $G$ is walk-regular. \label{corl:line_walk}
\end{corl}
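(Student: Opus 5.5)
The plan is to connect the diagonal entries of powers of $B^TB$ to the quantities $\mathcal{L}^*(L^\ell)$ appearing in Theorem~\ref{thm:total_walk_char}. Walk-regularity of the signed line graph means that $(A_\sigma^\ell)_{ee}$ is independent of $e$ for every $\ell\ge 0$. Since $B^TB = 2I + A_\sigma$, we have $A_\sigma = B^TB - 2I$. Expanding by the binomial theorem, each $A_\sigma^\ell$ is a polynomial in $B^TB$ with coefficients not depending on $e$, and conversely. So walk-regularity of the signed line graph is equivalent to: for every $\ell\ge 0$, the diagonal of $(B^TB)^\ell$ is constant. I will argue both directions of this triangular change of basis by induction on $\ell$. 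By the remark preceding the corollary, these diagonals are orientation-independent, so it suffices to argue for one fixed orientation.

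The key identity is that for $\ell\ge 1$,
\[
(B^TB)^{\ell} = B^T (BB^T)^{\ell-1} B = B^T L^{\ell-1} B,
\]
and hence
\[
\bigl((B^TB)^\ell\bigr)_{ee} = z_e^T L^{\ell-1} z_e = \mathcal{L}^*(L^{\ell-1})_e .
\]
For $\ell=0$ the diagonal is identically $1$, so it is trivially constant. Therefore the diagonal of $(B^TB)^\ell$ is constant for all $\ell\ge 1$ exactly when $\mathcal{L}^*(L^{m})$ is constant for all $m\ge 0$. By Theorem~\ref{thm:total_walk_char}, that is exactly edge-rigidity.

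Assembling the argument: edge-rigidity is equivalent to constant $\mathcal{L}^*(L^m)$ for all $m$, which is equivalent to constant diagonals of $(B^TB)^\ell$ for all $\ell$, which is equivalent to constant diagonals of $A_\sigma^\ell$ for all $\ell$. The last condition is walk-regularity of the signed line graph, for any, equivalently every, orientation.

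I expect no real obstacle. The only points needing care are the polynomial change of basis between powers of $A_\sigma$ and of $B^TB$, which is unitriangular and hence invertible, and the definition of walk-regularity for signed graphs as constant diagonals of all powers of the signed adjacency matrix.
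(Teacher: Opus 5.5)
Your proposal is correct and is essentially the paper's proof: both use the identity $z_e^T L^{\ell-1} z_e = \bigl((B^TB)^{\ell}\bigr)_{ee}$ to turn the criterion of Theorem~\ref{thm:total_walk_char} into constant diagonals of all powers of $B^TB$. Both then pass to $A_\sigma = B^TB - 2I$, using invariance of these diagonals under switching (change of orientation); your unitriangular binomial change of basis just spells out a step the paper states in one line.
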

\begin{proof}
By Theorem~\ref{thm:total_walk_char}, $G$ is edge-rigid if and only if for every
integer $\ell \ge 0$, $\mathcal{L}^*(L^\ell)_{ab} = C_\ell$ for all $ab \in E$. Using the
factorization $L = BB^T$, we rewrite the adjoint operator acting on $L^\ell$ for
a specific edge $e = \{a,b\} \in E$. Let $e_{ab} \in \fld{R}^E$ denote the
standard basis vector for this edge, so that the corresponding column in the
oriented incidence matrix is $Be_{ab} = z_{ab}$. Then:
\[
    \mathcal{L}^*(L^\ell)_{ab} = z_{ab}^T (BB^T)^\ell z_{ab}
    = (Be_{ab})^T (BB^T)^\ell (Be_{ab})
    = e_{ab}^T (B^T B)^{\ell+1} e_{ab}.
\]
This final expression is exactly the $ab$-diagonal entry of the matrix
$(B^TB)^{\ell+1}$. Therefore, $\mathcal{L}^*(L^\ell)_{ab}$ is a global constant $C_\ell$
for all edges $ab \in E$ and all integers $\ell \ge 0$ if and only if the diagonal
of $(B^T B)^m$ is constant for all $m \ge 1$. Because $B^T B = 2I + A_{\sigma}$,
this is equivalent to the diagonal entries of $A_{\sigma}^m$ being constant for
all $m \ge 0$. Since this invariance holds regardless of the initial orientation
chosen for $G$, this is exactly the definition of the corresponding line graph being walk-regular.
\end{proof}

\section{Walk-Regularity and Biregularity}\label{sec:walk_regularity}

In this section, we further explore the connection between edge-rigidity and
walk-regularity: we show that all regular edge-rigid graphs are
walk-regular, and provide analogous results for bipartite biregular edge-rigid
graphs.

We first establish a combinatorial consequence of edge-rigidity:

\begin{lemma}\label{lemma:edge_rigid_reg_bireg}
    If $G$ is a connected edge-rigid graph, then the sum of the degrees of the endpoints of any edge is a global constant across the graph. Consequently, $G$ must be either regular or biregular bipartite.
\end{lemma}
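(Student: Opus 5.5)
We apply Theorem~\ref{thm:total_walk_char} with $\ell = 1$. Since $G$ is edge-rigid, there is a constant $C_1$ with $\mathcal{L}^*(L)_{ab} = L_{aa} + L_{bb} - 2L_{ab} = C_1$ for every $ab \in E$. For an edge $ab$ we have $L_{aa} = d_a$, $L_{bb} = d_b$ and $L_{ab} = -1$. Hence $d_a + d_b + 2 = C_1$, so $d_a + d_b = C_1 - 2 =: c$ is the same for every edge. This proves the first claim directly. (Equivalently, one could compute the diagonal of $(B^TB)^2$ via Corollary~\ref{corl:line_walk}, but the $\ell=1$ computation is simpler.)

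For the structural consequence we use connectivity. Fix a vertex $v_0$ with degree $d := d_{v_0}$. Each neighbour of $v_0$ has degree $c - d$. Each neighbour of such a vertex has degree $c - (c-d) = d$, and so on. By induction along paths from $v_0$, every vertex at even distance from $v_0$ has degree $d$, and every vertex at odd distance has degree $c-d$. This is well defined because $G$ is connected, so every vertex is reached by some path. More precisely, the degree of the endpoint of any walk from $v_0$ depends only on the parity of the walk's length.

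We then split into two cases. If $d = c - d$, all degrees equal $d$ and $G$ is regular. If $d \neq c-d$, the map $v \mapsto d_v$ takes exactly two values, and every edge joins a vertex of degree $d$ to one of degree $c-d$: an edge cannot join two vertices of degree $d$, since their sum would be $2d \neq c$, and likewise for $c-d$. So the two degree classes form a bipartition with no edges inside either class. Thus $G$ is bipartite, each side has constant degree, and $G$ is biregular bipartite.

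The only delicate point is making sure the parity argument does not secretly assume bipartiteness. The case split handles this. When $d = c-d$, odd closed walks are harmless. When $d \ne c-d$, the degree classes themselves witness bipartiteness, with no parity well-definedness needed beyond the edge-sum identity. The edge-sum identity itself comes for free from Theorem~\ref{thm:total_walk_char}.
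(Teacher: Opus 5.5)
Your proof is correct and follows essentially the same route as the paper. Both arguments get constancy of $\mathcal{L}^*(L)$ on edges, hence $d_a+d_b+2$ constant; you take this as the $\ell=1$ case of Theorem~\ref{thm:total_walk_char}, while the paper sums the eigenprojector identities directly. Both then finish with connectivity and a parity argument along walks; the only cosmetic difference is that you split on whether $d=c-d$ and use the two degree classes as the bipartition, whereas the paper splits on whether $G$ contains an odd cycle.
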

\begin{proof}
By Theorem~\ref{thm:cospectral_rigid_char},
the orthogonal projectors $E_i$ onto the eigenspaces of the unweighted Laplacian
$L$ satisfy $\mathcal{L}^*(E_i) = \gamma_i\allones$ for scalars $\gamma_i > 0$. Writing
$L = \sum_{i=1}^r \lambda_i E_i$, it follows that $\mathcal{L}^*(L) = C\allones$ for some
$C \in \fld{R}$. For any edge $ab \in E$,
\[
      \mathcal{L}^*(L)_{ab} = L_{aa} + L_{bb} - 2L_{ab} = d_a + d_b + 2 = C,
\]
where $d_a$ denotes the degree of vertex $a$, so $d_a + d_b$ is a global constant
across all edges.

Now consider any two vertices $a$ and $c$ connected by a walk of length two
through an intermediate vertex $b$. We have $d_a + d_b = d_b + d_c$, which implies
$d_a = d_c$. If $a$ and $c$ are in turn connected by a walk of length $2\ell$, for some $\ell \geq 1$,
we can use the previous case as the base of an induction to easily conclude that any two vertices connected by walks of even length must share the same degree.
If the graph contains an odd cycle, then any two vertices are connected by a walk of even length, because the graph is connected and the odd cycle may be used to choose the parity of a walk connecting any two vertices. So all vertices have the same degree and $G$ is regular. If the graph contains no odd cycles, it is bipartite, and the two sides of the bipartition form two degree classes, making $G$ biregular bipartite.
\end{proof}

Thus every edge-rigid graph is either regular or biregular bipartite. We handle
the regular case first:

\begin{theorem}
    If $G$ is a regular edge-rigid graph, then $G$ is walk-regular.
\end{theorem}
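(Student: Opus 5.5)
The plan is to show by induction on $\ell$ that every power $L^\ell$ has constant diagonal. Regularity then transfers this to the adjacency matrix. Let $G$ be $k$-regular, so $L = kI - A$. By Theorem~\ref{thm:total_walk_char}, edge-rigidity gives constants $C_\ell$ with
\[
(L^\ell)_{aa} + (L^\ell)_{bb} - 2(L^\ell)_{ab} = C_\ell \quad\text{for every } ab \in E \text{ and every } \ell \ge 0 .
\]
Write $d^{(\ell)} \in \fld{R}^V$ for the diagonal of $L^\ell$, that is, $d^{(\ell)}_a := (L^\ell)_{aa}$.

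The key step is a recursion for $d^{(\ell+1)}$ in terms of $d^{(\ell)}$. Expanding $L^{\ell+1} = L\cdot L^\ell$ at the $(a,a)$ entry gives
\[
d^{(\ell+1)}_a = k\, d^{(\ell)}_a - \sum_{b \sim a} (L^\ell)_{ab}.
\]
To eliminate the off-diagonal entries, sum the edge identity over the $k$ neighbours $b$ of $a$:
\[
k\, d^{(\ell)}_a + \sum_{b\sim a} d^{(\ell)}_b - 2\sum_{b\sim a}(L^\ell)_{ab} = k C_\ell .
\]
Solving this for $\sum_{b\sim a}(L^\ell)_{ab}$ and substituting yields
\[
d^{(\ell+1)}_a = \tfrac{1}{2}\Big(k\, d^{(\ell)}_a - \sum_{b\sim a} d^{(\ell)}_b\Big) + \tfrac{k C_\ell}{2}.
\]
In vector form this reads $d^{(\ell+1)} = \tfrac12 L\, d^{(\ell)} + \tfrac{kC_\ell}{2}\allones$.

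The induction is now immediate. For the base case, $d^{(0)} = \allones$. If $d^{(\ell)}$ is a multiple of $\allones$, then $L\,d^{(\ell)} = 0$, so $d^{(\ell+1)}$ is again a multiple of $\allones$. Hence every power of $L$ has constant diagonal.

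Finally, $A = kI - L$, so the binomial theorem gives
\[
A^m = \sum_{\ell=0}^{m} \binom{m}{\ell} k^{m-\ell}(-1)^\ell L^\ell .
\]
Each $L^\ell$ has constant diagonal, so $A^m$ has constant diagonal for every $m \ge 0$, which is precisely walk-regularity.

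The only real obstacle is the off-diagonal sum $\sum_{b\sim a}(L^\ell)_{ab}$. It is handled by summing the edge condition over the neighbourhood of $a$, and this is the one place regularity is used: there are exactly $k$ edges at every vertex, and the $L_{aa}$ coefficient equals $k$.
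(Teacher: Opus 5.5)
Your proof is correct and follows essentially the same route as the paper: you sum the edge condition over the $k$ neighbours of a vertex to get a recurrence for the diagonal, then induct from the constant base case. The only difference is the order of steps. The paper first transfers the edge condition from $L^\ell$ to $A^\ell$ and inducts on closed-walk counts, while you induct on the diagonal of $L^\ell$ directly, which gives the explicit identity $(L^{\ell+1})_{aa} = kC_\ell/2$, and convert to $A^m$ by the binomial expansion at the end.
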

\begin{proof}
    Assume $G$ is a $d$-regular edge-rigid graph with adjacency matrix $A$. Let
    $W_\ell(a) = (A^\ell)_{aa}$ denote the number of closed walks of length $\ell$
    starting at vertex $a \in V$, and let $W_\ell \in \fld{R}^V$ be the vector of
    these walk counts.

    Because $G$ is $d$-regular, its unweighted Laplacian is $L = dI - A$.
    By Theorem~\ref{thm:total_walk_char}, edge-rigidity implies that for any
    edge $ab \in E$, $\mathcal{L}^*(L^\ell)_{ab} = C_\ell$ for some constant $C_\ell$ and
    all $\ell \ge 0$. Since $A^\ell$ can be expanded as a polynomial in $L$,
    linearity implies that $\mathcal{L}^*(A^\ell)$ is also a constant vector. That is, there
    exist constants $C'_\ell$ such that for any edge $ab \in E$,
    \[
        \mathcal{L}^*(A^\ell)_{ab} = W_\ell(a) + W_\ell(b) - 2(A^\ell)_{ab} = C'_\ell.
    \]
    Summing over all $d$ neighbors $b$ adjacent to $a$:
    \[
        \sum_{b \sim a} W_\ell(a) + \sum_{b \sim a} W_\ell(b) -
        2 \sum_{b \sim a} (A^\ell)_{ab} = d C'_\ell.
    \]
    The first term equals $dW_\ell(a)$ and the second equals $(AW_\ell)_a$.
    For the third term, we observe that
    \[
    \sum_{b \sim a}(A^\ell)_{ab} = \sum_{b \in V}A_{ab}(A^\ell)_{ba}
    = (A^{\ell+1})_{aa} = W_{\ell+1}(a).
    \]
    Combining these, we obtain the recurrence
    \[
    dW_\ell + AW_\ell - 2W_{\ell+1} = dC'_\ell\allones.
    \]
    We now proceed by induction on $\ell$ to show that $W_\ell$ is a constant
    vector for all $\ell \ge 0$. The base case $\ell=0$ is trivial since
    $W_0 = \allones$. Assume $W_\ell = \alpha_\ell \allones$ for some scalar
    $\alpha_\ell$. Because $G$ is $d$-regular,
    $AW_\ell = A(\alpha_\ell\allones) = d\alpha_\ell\allones$.
    Substituting into the recurrence yields
    \[
        2W_{\ell+1} = d(2\alpha_\ell - C'_\ell)\allones,
    \]
    so $W_{\ell+1}$ is also a scalar multiple of $\allones$. By induction, the
    number of closed walks of any length is constant across all vertices, hence
    $G$ is walk-regular.
\end{proof}

Combining the previous result with Corollary~\ref{corl:walk_reg_edge_rigid_1wr} gives us a combinatorial characterization of regular edge-rigid graphs:

\begin{corl}
    If $G$ is a regular graph, then it is edge-rigid if and only if it is
    $1$-walk-regular.
    \hfill$\blacksquare$
\end{corl}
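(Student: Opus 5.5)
The corollary follows by combining the preceding theorem with Corollary~\ref{corl:walk_reg_edge_rigid_1wr}. We treat the two directions separately, and throughout assume $G$ is $d$-regular.

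\emph{Edge-rigid implies $1$-walk-regular.} Suppose $G$ is $d$-regular and edge-rigid. By the theorem just proved, $G$ is walk-regular. We can then apply Corollary~\ref{corl:walk_reg_edge_rigid_1wr} to the walk-regular graph $G$, and its forward implication gives that $G$ is $1$-walk-regular.

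\emph{$1$-walk-regular implies edge-rigid.} Let $G$ be $1$-walk-regular. By definition it is in particular walk-regular, so Corollary~\ref{corl:walk_reg_edge_rigid_1wr} applies directly and gives edge-rigidity.

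This direction can also be checked concretely from Theorem~\ref{thm:total_walk_char}. Since $L=dI-A$, each $L^\ell$ is a polynomial in $A$. For a $1$-walk-regular graph:
\begin{itemize}
\item the diagonal entries $(A^m)_{aa}$ are constant;
\item the entries $(A^m)_{ab}$ on edges $ab\in E$ are constant.
\end{itemize}
It follows that $L^\ell_{aa}+L^\ell_{bb}-2L^\ell_{ab}$ is independent of the edge $ab$, which is exactly the criterion of Theorem~\ref{thm:total_walk_char}.

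The only step that needs care is the converse direction of Corollary~\ref{corl:walk_reg_edge_rigid_1wr}. It requires that constancy of $\mathcal{L}^*(A^m)$ together with walk-regularity forces $(A^m)_{ab}$ to be constant on edges. This holds because
\[
(A^m)_{ab}=\tfrac12\bigl(W_m(a)+W_m(b)-\mathcal{L}^*(A^m)_{ab}\bigr),
\]
and both terms on the right are constant under the hypotheses. Everything else in the argument is direct citation.
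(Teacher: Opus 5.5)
Your proposal is correct and follows the paper's route exactly: the corollary combines the theorem that regular edge-rigid graphs are walk-regular with Corollary~\ref{corl:walk_reg_edge_rigid_1wr}, and uses the fact that $1$-walk-regular graphs are walk-regular for the reverse direction. Your supplementary check via $L = dI - A$ and Theorem~\ref{thm:total_walk_char} is also sound; the step you call the ``converse direction'' is really the forward implication (edge-rigid $\Rightarrow$ $1$-walk-regular), but this is only a labeling slip.
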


We now turn to edge-rigid graphs that are biregular bipartite. First, we introduce an analogous notion of walk-regularity for this case:

\begin{defin}
Let $G = (V_1 \cup V_2, E)$ be a bipartite graph. We say $G$ is
\textit{walk-biregular} if, for every integer $\ell \ge 0$, there exist constants
$\alpha_\ell^{(1)}$ and $\alpha_\ell^{(2)}$ such that the number of closed walks
of length $\ell$ starting at any vertex $a \in V_1$ is exactly $\alpha_\ell^{(1)}$,
and starting at any vertex $b \in V_2$ is exactly $\alpha_\ell^{(2)}$.
Furthermore, we say $G$ is \textit{$1$-walk-biregular} if it is walk-biregular
and the number of walks of length $\ell$ between the endpoints of any edge
$ab \in E$ is a global constant $\beta_\ell$ depending only on $\ell$.
\end{defin}

Note that because $G$ is bipartite, $\alpha_\ell^{(1)} = \alpha_\ell^{(2)} = 0$
for all odd $\ell$, and $\beta_\ell = 0$ for all even $\ell$. We can now state
and prove the bipartite counterpart to the regularity result above.

\begin{theorem}
If $G = (V_1 \cup V_2, E)$ is a biregular bipartite edge-rigid graph, then $G$
is walk-biregular.
\end{theorem}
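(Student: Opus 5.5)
We follow the regular case, but the Laplacian is now $L = D - A$, where $D$ is diagonal with $D_{aa} = d_1$ for $a \in V_1$ and $D_{bb} = d_2$ for $b \in V_2$. Since $D$ is not scalar, $A^\ell$ is not a polynomial in $L$, so we cannot pass from $\mathcal{L}^*(L^\ell)$ to $\mathcal{L}^*(A^\ell)$ by linearity. Instead we use the bipartite structure. Write
\[
A=\begin{pmatrix}0&N\\N^T&0\end{pmatrix}, \qquad L=\begin{pmatrix}d_1I&-N\\-N^T&d_2I\end{pmatrix}.
\]
The key object is $M:=NN^T$ on $V_1$, which has constant row sums $d_1d_2$. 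Closed walks of length $2\ell$ at $a\in V_1$ number $(M^\ell)_{aa}$, and closed walks of length $2\ell$ at $b \in V_2$ number $((N^TN)^\ell)_{bb}$. Odd lengths contribute zero.

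First I would relate the blocks of $L^\ell$ to polynomials in $M$ and $N^TN$. The cleanest route is to work with $\mathcal{L}^*$ applied to the spectral projectors $E_i$ rather than to $L^\ell$. If $\lambda\neq 0,d_1+d_2$ and $v=(x,y)$ is a $\lambda$-eigenvector, then $(d_1-\lambda)x=Ny$ and $(d_2-\lambda)y=N^Tx$. Hence $x$ is an eigenvector of $M$ with eigenvalue $\mu=(d_1-\lambda)(d_2-\lambda)$, and $y = N^Tx/(d_2-\lambda)$. This gives a correspondence between Laplacian eigenvalues and eigenvalues $\mu$ of $M$, in which each $\mu>0$ produces the two roots $\lambda_\pm$ of $(d_1-\lambda)(d_2-\lambda)=\mu$.

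For each eigenvalue $\mu>0$ of $M$, let $F_\mu$ be its projector. A direct computation gives the sum of the two Laplacian projectors for $\lambda_\pm$, and from it
\[
\mathcal{L}^*(E_{\lambda_\pm})_{ab} = c_1(\lambda_\pm)(F_\mu)_{aa} + c_2(\lambda_\pm)\,\big(N^TF_\mu N\big)_{bb}/\mu \;-\; 2c_3(\lambda_\pm)\,(F_\mu N)_{ab}
\]
for explicit coefficients $c_1,c_2,c_3$ depending only on $d_1,d_2,\lambda$. Edge-rigidity makes both of these vectors constant over edges.

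The next step is to sum over the neighbours of $a\in V_1$, as in the regular proof. The identity $\sum_{b\sim a}(F_\mu N)_{ab}=(F_\mu M)_{aa}=\mu(F_\mu)_{aa}$ turns the cross term into a diagonal term. For the middle term, I would use $\sum_{b\sim a}(N^TF_\mu N)_{bb} = (NN^TF_\mu N\ldots)$ in the form $(N\,\mathrm{diag}(N^TF_\mu N)\,)$ applied to $\allones$. Doing this for both $\lambda_+$ and $\lambda_-$ gives two linear relations among the vectors $f:=\mathrm{diag}(F_\mu)$ on $V_1$, $g:=\mathrm{diag}(N^TF_\mu N)/\mu$ on $V_2$, and $Ng$, with constant right-hand sides. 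Note that $g$ equals the diagonal of the $V_2$-projector for $\mu$ on $N^TN$. Eliminating between the two relations should give that $f$ and $g$ are each constant.

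The alternative, cleaner check is that $\mathcal{L}^*(E_{\lambda_+}+E_{\lambda_-})$ and $\mathcal{L}^*(\lambda_+E_{\lambda_+}+\lambda_-E_{\lambda_-})$ are both constant. The combination that isolates the diagonal parts should directly give constant $f_a+g_b$ along every edge, together with constant $(F_\mu N)_{ab}$. As in Lemma~\ref{lemma:edge_rigid_reg_bireg}, constant $f_a+g_b$ over edges only yields $f$ constant up to a two-class ambiguity. To break it, I would use the second relation. Summing $(F_\mu N)_{ab}=\text{const}$ over $b\sim a$ gives $\mu f_a=d_1\cdot\text{const}$, so $f$ is constant; then $g$ is constant because $f_a+g_b$ is.

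Finally, $(M^\ell)_{aa}=\sum_\mu \mu^\ell f_a^{(\mu)}$ is then independent of $a\in V_1$, and the symmetric argument handles $V_2$. The kernel $\mu=0$ needs no separate treatment, since its term contributes $0^\ell$ for $\ell\ge1$. I expect the main obstacle to be the bookkeeping at the exceptional eigenvalues: $\lambda=d_1$ or $d_2$ (where $\mu=0$ and only one block is nonzero), the eigenvalue $\lambda = d_1+d_2$, and the case $\lambda_+=\lambda_-$. Each of these needs a short separate check that the coefficient producing the cross term is nonzero, so that constancy of $(F_\mu N)_{ab}$ can be extracted.
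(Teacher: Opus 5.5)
Your spectral route differs from the paper's and does work, but one step is wrong as written; luckily it is not needed.

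\textbf{What is correct.} The span of the $\lambda_+$- and $\lambda_-$-eigenspaces is $\ker(NN^T-\mu I)\oplus\ker(N^TN-\mu I)$. Hence $E_{\lambda_+}+E_{\lambda_-}=F_\mu\oplus G_\mu$ with $G_\mu=N^TF_\mu N/\mu$, and edge-rigidity gives $f_a+g_b$ constant on edges.

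\textbf{The wrong step.} Your claim that the second combination gives $(F_\mu N)_{ab}$ constant fails when $d_1\neq d_2$. The matrix $\lambda_+E_{\lambda_+}+\lambda_-E_{\lambda_-}=L(F_\mu\oplus G_\mu)$ has diagonal blocks $d_1F_\mu$, $d_2G_\mu$ and off-diagonal block $-F_\mu N$. Its image under $\mathcal{L}^*$ is therefore $d_1f_a+d_2g_b+2(F_\mu N)_{ab}$. Together with $f_a+g_b$ constant, this only makes $(d_1-d_2)f_a+2(F_\mu N)_{ab}$ constant.

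\textbf{Why it is not needed.} There is no two-class ambiguity to break, because $f$ lives on $V_1$ and $g$ on $V_2$. If $f_a+g_b=\alpha$ on every edge, then $f_a=\alpha-g_b=f_{a'}$ whenever $a,a'\in V_1$ share a neighbor $b$. Connectivity then gives $f$ constant on $V_1$ and $g$ constant on $V_2$, which is all you need; the two constants need not agree. Constancy of $(F_\mu N)_{ab}$ follows a posteriori from the relation above.

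\textbf{Your first route.} Summing both relations over $b\sim a$ and eliminating is less robust. The $2\times 2$ system in $f_a$ and $(Ng)_a$ has determinant $(\lambda_+-\lambda_-)\bigl(d_1(d_2-d_1)-2\mu\bigr)$. This vanishes if $d_1(d_2-d_1)/2$ is an eigenvalue of $NN^T$.

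\textbf{The exceptional cases.} These are harmless.
\begin{itemize}
\item For $\mu>0$ the discriminant $(d_1-d_2)^2+4\mu$ is positive, so $\lambda_+\neq\lambda_-$ and $\lambda_\pm\notin\{d_1,d_2\}$.
\item The pair for $\mu=d_1d_2$ is $\{0,d_1+d_2\}$, where $F_\mu=\allones\allones^T/|V_1|$ anyway.
\item $\mu=0$ drops out for $\ell\ge 1$.
\end{itemize}

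\textbf{Comparison with the paper.} The paper uses no eigenvalue pairing. From $\mathcal{L}^*(L^\ell)=C_\ell\allones$, it shows by induction that the diagonal of $L^\ell$ is constant on $V_1$ and on $V_2$. Given this at step $\ell$, the edge condition makes $(L^\ell)_{ab}$ constant on edges, and $(L^{\ell+1})_{aa}=d_1(L^\ell)_{aa}-\sum_{b\sim a}(L^\ell)_{ab}$ closes the induction. It then converts to $A^\ell$ by expanding $(D-A)^\ell e_a$, where $D$ acts as $d_1$ or $d_2$ according to parity, and inverting this triangular relation. That argument is elementary and mirrors the regular case, at the cost of the $L$-to-$A$ conversion.

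Your argument works directly with $NN^T$ and $N^TN$, so no conversion is needed, and it shows more. Only the sums $E_{\lambda_+}+E_{\lambda_-}$ are used, so walk-biregularity already follows from constancy of $\mathcal{L}^*(E_{\lambda_+}+E_{\lambda_-})$ for each pair. The individual projectors are what yield edge-constancy of $(F_\mu N)_{ab}$, i.e.\ of the odd walk counts between endpoints of edges relevant to $1$-walk-biregularity.
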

\begin{proof}
Assume $G$ is connected and biregular bipartite with vertex partitions $V_1$ and
$V_2$, where vertices in $V_1$ have degree $d_1$ and vertices in $V_2$ have degree
$d_2$. We express the degree matrix as $D = d_1 I_1 + d_2 I_2$, where $I_1$ and
$I_2$ are the diagonal indicator matrices for $V_1$ and $V_2$, respectively.

Let $U_\ell(a) = (L^\ell)_{aa}$ and $U_\ell \in \fld{R}^V$. We will show by induction that $U_\ell$ is constant on
$V_1$ and on $V_2$ for every $\ell \geq 0$. The base case of $\ell = 0$ is immediate, since
$L^0 = I$. Assume the claim holds for all $k \leq \ell$. By Theorem~\ref{thm:total_walk_char}, $\mathcal{L}^*(L^\ell)_{ab} = C_\ell$
for every edge $ab$ with $a \in V_1$ and $b \in V_2$:
\begin{equation}\label{eq:biregular_walk}
    U_\ell(a) + U_\ell(b) - 2(L^\ell)_{ab} = C_\ell.
\end{equation}
By the inductive hypothesis, $U_\ell(a) = \gamma_\ell^{(1)}$ for all $a \in V_1$ and
$U_\ell(b) = \gamma_\ell^{(2)}$ for all $b \in V_2$, so Equation~\eqref{eq:biregular_walk}
shows that $(L^\ell)_{ab}$ is a global constant $\eta_\ell$ for all edges
$ab \in E$.

Now consider the diagonal of $L^{\ell+1} = LL^\ell = (D-A)L^\ell$. For any
$a \in V_1$:
\[
    U_{\ell+1}(a) = d_1(L^\ell)_{aa} - \sum_{b \sim a}(L^\ell)_{ab}
    = d_1\gamma_\ell^{(1)} - d_1\eta_\ell = d_1(\gamma_\ell^{(1)} - \eta_\ell),
\]
which is constant for all $a \in V_1$. A symmetric argument gives
$U_{\ell+1}(b) = d_2(\gamma_\ell^{(2)} - \eta_\ell)$, constant for all $b \in V_2$.
By induction, the diagonal of $L^\ell$ is constant on each partition for all
$\ell \ge 0$.

We must now show that this implies $G$ is walk-biregular.
As the entries of $A^\ell$ count the number of walks of length $\ell$ between vertices, it follows that for any $a \in V_1$,
$A^\ell e_a$ is entirely supported on $V_1$ for even $\ell$, and on $V_2$ for odd $\ell$. 
Consequently, $DA^\ell e_a = d_1 A^\ell e_a$ for even $\ell$, and $DA^\ell e_a  = d_2 A^\ell e_a$ for odd $\ell$.
Because $L = D - A$, the vector $L^\ell e_a = (D - A)^\ell e_a$ can be expanded as a polynomial in $D$ and $A$. By evaluating from right to left, every instance of $D$ acts as either $d_1$ or $d_2$ depending on the parity of the number of $A$'s that have already been applied. Therefore, $L^\ell e_a$ simplifies to a linear combination $\sum_{i=0}^\ell c_{i,\ell}^{(1)} A^i e_a$, where the coefficients depend only on $d_1, d_2,$ and $\ell$. 
By a symmetric argument, $(D-L)^\ell e_a$ also evaluates to a linear combination of $L^i e_a$.
Thus, $(A^\ell)_{aa} = e_a^T A^\ell e_a$ is a linear combination of the entries $(L^i)_{aa}$ for $i \le \ell$. Since we established by induction that $(L^i)_{aa}$ is constant across $V_1$ (and similarly for $V_2$), it follows that the diagonal entries of $A^\ell$ are constant on $V_1$ and $V_2$, proving that $G$ is walk-biregular.
\end{proof}

Analogously to the regular case, we can conclude the following:

\begin{corl}
    Let $G$ be a walk-biregular graph. Then $G$ is edge-rigid if and only if it
    is $1$-walk-biregular.
\end{corl}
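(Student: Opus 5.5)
Both directions go through Theorem~\ref{thm:total_walk_char}: $G$ is edge-rigid if and only if $\mathcal{L}^*(L^\ell)$ is a constant vector for every $\ell \geq 0$. Assume $G$ is walk-biregular with parts $V_1,V_2$ and degrees $d_1,d_2$; reading the definition with $\ell=2$, the number of closed walks of length two at a vertex is its degree, so $G$ is biregular. The plan is to relate $\mathcal{L}^*(L^\ell)$ to $\mathcal{L}^*(A^i)$ for $i\le \ell$, using the parity trick already used in the proof of the previous theorem. Since $G$ is bipartite and biregular, for $a\in V_1$ we have $L^\ell e_a = \sum_{i=0}^{\ell} c^{(1)}_{i,\ell} A^i e_a$, with coefficients depending only on $d_1,d_2,\ell$. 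For $b\in V_2$ we get the analogous expansion with $c^{(2)}_{i,\ell}$, and conversely each $A^\ell e_a$ is such a combination of the $L^i e_a$.

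For an edge $ab$ with $a\in V_1$, $b\in V_2$, I would compute the three terms of
\[
\mathcal{L}^*(L^\ell)_{ab} = (L^\ell)_{aa} + (L^\ell)_{bb} - 2(L^\ell)_{ab}.
\]
The diagonal terms are $\sum_i c^{(1)}_{i,\ell}(A^i)_{aa}$ and $\sum_i c^{(2)}_{i,\ell}(A^i)_{bb}$. Under walk-biregularity these equal constants depending only on the side, and since every edge has exactly one endpoint in each part, their sum is a global constant. The off-diagonal term is $(L^\ell)_{ab} = e_b^T L^\ell e_a = \sum_i c^{(1)}_{i,\ell}(A^i)_{ab}$, and each $(A^i)_{ab}$ counts walks of length $i$ between the endpoints of an edge. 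Hence if $G$ is $1$-walk-biregular, every term is constant over edges, so $\mathcal{L}^*(L^\ell)$ is constant for all $\ell$, and $G$ is edge-rigid by Theorem~\ref{thm:total_walk_char}.

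Conversely, suppose $G$ is walk-biregular and edge-rigid. Theorem~\ref{thm:total_walk_char} gives $\mathcal{L}^*(L^\ell)=C_\ell\allones$, and walk-biregularity makes the diagonal part constant (as above), so $(L^\ell)_{ab}=\eta_\ell$ is constant over edges for every $\ell$, exactly as in the proof of the previous theorem. I would then invert the triangular relation, writing $A^\ell e_a = \sum_{i\le \ell} c'_{i,\ell} L^i e_a$ for $a\in V_1$, again with coefficients depending only on $d_1,d_2,\ell$. This gives $(A^\ell)_{ab} = \sum_i c'_{i,\ell}\eta_i$, which is constant over edges, so $G$ is $1$-walk-biregular.

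The only delicate point is the orientation convention: every edge must be written with its $V_1$-endpoint as $a$, so that a single coefficient family $c^{(1)}$ (or $c'$) applies uniformly to all edges. Symmetry of $L^\ell$ and $A^\ell$ guarantees this choice loses nothing. I do not expect any real obstacle beyond stating this convention carefully and noting that the invertibility of the triangular relation was already observed in the previous proof.
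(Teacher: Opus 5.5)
Your proposal is correct and follows the paper's proof essentially step for step. Both directions go through Theorem~\ref{thm:total_walk_char}, and both use the parity-based expansion $L^\ell e_a=\sum_{i\le\ell}c^{(1)}_{i,\ell}A^i e_a$ for $a\in V_1$ to transfer constancy of diagonal and edge entries between powers of $A$ and powers of $L$.

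You also make explicit two points the paper glosses over. First, walk-biregularity at $\ell=2$ already forces biregularity. Second, you justify the reverse expansion by inverting a triangular system with diagonal entries $(-1)^\ell$; the paper's appeal to a ``symmetric argument'' for $(D-L)^\ell e_a$ is looser, since $L^i e_a$ is not supported on a single side.
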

\begin{proof}
Suppose $G$ is $1$-walk-biregular. Then for any $\ell \ge 0$, the diagonal entries of $A^\ell$ are constant on $V_1$ and $V_2$, and the off-diagonal entries $(A^\ell)_{ab}$ are a global constant across all edges $ab \in E$. Since $L^\ell e_a = (D-A)^\ell e_a$ can be expressed as $\sum_{i=0}^\ell c_{i,\ell}^{(1)} A^i e_a$ (and similarly for $e_b$), the entry $(L^\ell)_{ab} = e_a^T L^\ell e_b$ is a linear combination of the edge entries $(A^i)_{ab}$. Because the terms $(A^i)_{ab}$ are constant across all edges, $(L^\ell)_{ab}$ is also constant for all $ab \in E$. The same applies to the diagonals $(L^\ell)_{aa}$ and $(L^\ell)_{bb}$. Consequently, $\mathcal{L}^*(L^\ell)_{ab} = (L^\ell)_{aa} + (L^\ell)_{bb} - 2(L^\ell)_{ab}$ evaluates to a global constant $C_\ell$ for all $\ell \ge 0$. By Theorem~\ref{thm:total_walk_char}, $G$ is edge-rigid.

 Now suppose $G$ is edge-rigid. By Theorem~\ref{thm:total_walk_char}, $\mathcal{L}^*(L^\ell)_{ab}$ is a global constant $C_\ell$ for all $\ell \ge 0$. Since $G$ is walk-biregular, we know that the diagonal entries of $A^\ell$ are constant on $V_1$ and $V_2$, which by an analogous argument to the one above implies that the diagonal entries of $L^\ell$ must also be constant on $V_1$ and $V_2$. Since $\mathcal{L}^*(L^\ell)_{ab}$ and the diagonals are constant, $(L^\ell)_{ab}$ must be constant across all edges $ab \in E$. Finally, expanding $A^\ell e_a = (D-L)^\ell e_a$ as a linear combination of $L^i e_a$ demonstrates that $(A^\ell)_{ab}$ is a linear combination of the constant edge entries $(L^i)_{ab}$. Therefore, $(A^\ell)_{ab}$ is a global constant across all edges, meaning $G$ is $1$-walk-biregular.
\end{proof}

Combining the two cases yields a complete characterization of edge-rigidity:

\begin{corl} \label{corl:walk_reg_bireg}
A graph $G$ is edge-rigid if and only if it is either $1$-walk-regular or
$1$-walk-biregular.
\hfill$\blacksquare$
\end{corl}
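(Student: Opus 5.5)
The corollary follows by assembling results already proved in this section. I would treat the two implications separately and split according to the dichotomy of Lemma~\ref{lemma:edge_rigid_reg_bireg}.

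\emph{Forward direction.} Assume $G$ is edge-rigid. By Lemma~\ref{lemma:edge_rigid_reg_bireg}, $G$ is either regular or biregular bipartite.

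If $G$ is regular, the theorem on regular edge-rigid graphs gives that $G$ is walk-regular. Corollary~\ref{corl:walk_reg_edge_rigid_1wr} (walk-regular plus edge-rigid is equivalent to $1$-walk-regular) then shows that $G$ is $1$-walk-regular.

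If $G$ is biregular bipartite but not regular, the bipartite theorem gives that $G$ is walk-biregular. The preceding corollary then upgrades edge-rigidity to $1$-walk-biregularity.

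A regular bipartite graph falls into both branches, and either conclusion suffices.

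\emph{Backward direction.} A $1$-walk-regular graph is in particular walk-regular, so Corollary~\ref{corl:walk_reg_edge_rigid_1wr} gives edge-rigidity. A $1$-walk-biregular graph is by definition walk-biregular, so the preceding corollary gives edge-rigidity.

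\emph{Point to check.} The walk-biregular corollary used the expansion of $(D-A)^\ell e_a$ as a combination of the vectors $A^ie_a$. This requires $D$ to act as a scalar on each side of the bipartition, so the graph must be bipartite with a fixed bipartition and constant degree on each side. I therefore need to confirm that $1$-walk-biregularity forces this degree structure.

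It does: the number of closed walks of length $2$ at a vertex equals its degree. Walk-biregularity at $\ell=2$ therefore makes the degree constant on $V_1$ and on $V_2$, so the graph is biregular and the argument applies. Similarly, $1$-walk-regularity at $\ell=2$ forces regularity, which justifies writing $L=dI-A$ in that case.

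The only real work is this bookkeeping. No step is a genuine obstacle, since everything reduces to the cited corollaries.
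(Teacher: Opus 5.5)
Your proposal is correct and matches the paper's argument. You split edge-rigid graphs via Lemma~\ref{lemma:edge_rigid_reg_bireg} into the regular and biregular bipartite cases, apply the corresponding walk-(bi)regularity theorem, and then use Corollary~\ref{corl:walk_reg_edge_rigid_1wr} or its biregular analogue in both directions; your added check that closed walks of length $2$ count degrees (so $1$-walk-(bi)regularity forces (bi)regularity, justifying $L = dI - A$ and $D = d_1 I_1 + d_2 I_2$) is correct and only makes explicit what the paper leaves implicit.
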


\section{Gauges} \label{sec:gauge_duality}

Yet another perspective on edge-rigidity can be obtained by considering the notion of a gauge. The standard reference in a more general context is Rockafellar~\cite{rockafellar1997convex}, and a comprehensive treatment in the context of graph theory can be found at \cite{proenca2021dual}.

A function $g: \fld{R}^E_+ \to \fld{R}$ is called a \textit{gauge} if 
\begin{enumerate}[(a)]
\item $g$ is nonnegative, meaning that $g(w) \ge 0$ for all $w \in \fld{R}^E_+$, and $g(0) = 0$;
\item $g$ is positively homogeneous, meaning that $g(\alpha w) = \alpha g(w)$ for all $\alpha \ge 0$ and $w \in \fld{R}^E_+$;
\item $g$ is subadditive, meaning that $g(w + w') \le g(w) + g(w')$ for all $w, w' \in \fld{R}^E_+$.
\end{enumerate}
If in addition $g(w) = 0$ implies $w = 0$, then $g$ is positive definite, and if $g(w) \le g(w')$ whenever $w \le w'$ coordinate-wise, then we say the gauge is \textit{monotone}. Positive definite monotone gauges are restrictions of norms to the nonnegative orthant.

We recall the following minimization expression for $S_k(w)$ obtained by SDP strong duality from Eq.~\ref{eq:max_k_eig_sdp} in Section~\ref{sec:sdp_eigensums}:
\begin{equation}\label{eq:sum_topk_min}
    S_k(w) = \min\{ky + \tr{Y} :
    yI + Y - \mathcal{L}(w) \succeq 0,\; Y \succeq 0,\; y \in \fld{R}\}.
\end{equation}
With this, we can show the following:

\begin{lemma}
    The function $S_k:\fld{R}^E_+ \mapsto \fld{R}$ is a positive definite monotone gauge.
\end{lemma}
\begin{proof}
    Positive (semi)definiteness is immediate, as $\mathcal{L}(w) \succeq 0$ for all $w \in \fld{R}^E_+$, and $S_k(w) = 0$ implies that $\mathcal{L}(w) = 0$, which in turn implies $w = 0$. Positive homogeneity follows from the definition of $S_k$ and the fact that scaling $w$ by $\alpha$ scales $\mathcal{L}(w)$ by $\alpha$.
    To check subadditivity, let $w,w' \in \fld{R}^E_+$ and let $(Y,y)$ and $(Y',y')$ be the optimal solutions of the minimization SDPs for $S_k(w)$ and $S_k(w')$ (defined in~\eqref{eq:sum_topk_min}), respectively. Then $(Y + Y', y + y')$ is also a feasible solution for tha same SDP for $S_k(w + w')$ with objective value $S_k(w) + S_k(w')$, so $S_k(w + w') \le S_k(w) + S_k(w')$.
    Finally, monotonicity follows from the fact that if $w \le w'$, then $\mathcal{L}(w') - \mathcal{L}(w) = \mathcal{L}(w'- w) \succeq 0$, so any feasible solution for the maximization SDP of $S_k(w)$ (defined in~\eqref{eq:max_k_eig_sdp}) is also feasible for the SDP of $S_k(w')$, with an objective value at least that of the latter.
\end{proof}

Recall from \eqref{Xk} the definition $\mathcal{X}_k = \{ X \in \symmat{V} : 0 \preceq X \preceq I,\;\tr{X} = k \}$, and consider its image under $\mathcal{L}^*$:
\[ \mathcal{L}^*(\mathcal{X}_k) = \{ \mathcal{L}^*(X) : X \in \mathcal{X}_k \}. \]
The unit convex corner associated to $S_k^\circ$ is the downward hull of $\mathcal{L}^*(\mathcal{X}_k)$, and the unit convex corner associated to $S_k$ is also semidefinite representable.
\begin{prop}
    Consider the gauge $S_k$, and let $S_k^\circ$ be its dual gauge. Let $B_k$ be the unit convex corner associated to $S_k$, and let $B_k^\circ$ be the convex corner associated to $S_k^\circ$. Then these sets are given by:
    \begin{align}
        B_k^\circ & = \left\{ w \in\mathbb{R}_{+}^{E}: \exists \, y \in \mathcal{L}^*(\mathcal{X}_k) \text{ such that } w \leq y \right\}, \\
        B_k & = \left\{	w\geq 0: \exists\,t\in\mathbb{R},\ Z\succeq 0 \text{ such that } tI+Z \succeq \mathcal{L}(w),\ kt+ \tr{Z}\leq 1 \right\}.
    \end{align}
\end{prop}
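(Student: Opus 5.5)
The set $B_k$ falls straight out of \eqref{eq:sum_topk_min}; the set $B_k^\circ$ comes from the max formulation \eqref{eq:max_k_eig_sdp} together with gauge polarity on the nonnegative orthant. Throughout, I use the standard conventions
\[
B_k=\{w\ge 0: S_k(w)\le 1\},\qquad S_k^\circ(u)=\max\{\innprod{u}{w}: w\in B_k\},\qquad B_k^\circ=\{u\ge 0: S_k^\circ(u)\le 1\},
\]
so that $B_k^\circ=\{u\ge 0:\innprod{u}{w}\le 1\ \forall w\in B_k\}$.

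\emph{Step 1: $B_k$.} For $w\ge 0$ we have $S_k(w)\le 1$ if and only if the minimum in \eqref{eq:sum_topk_min} is at most $1$. By strong duality, shown earlier with Slater points, this minimum is attained. So $S_k(w)\le 1$ if and only if there exist $t\in\fld{R}$ and $Z\succeq 0$ with $tI+Z\succeq\mathcal{L}(w)$ and $kt+\tr{Z}\le 1$. This is exactly the stated description of $B_k$.

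\emph{Step 2: $B_k^\circ$.} Let $D_k$ be the downward hull of $\mathcal{L}^*(\mathcal{X}_k)$ in $\nnegreal{E}$. I first record two facts.
\begin{enumerate}[(i)]
\item $\mathcal{L}^*(\mathcal{X}_k)\subseteq\nnegreal{E}$, because $z_{ab}^TXz_{ab}\ge 0$ for $X\succeq 0$.
\item $\mathcal{L}^*(\mathcal{X}_k)$ is convex and compact, since $\mathcal{X}_k$ is convex and compact and $\mathcal{L}^*$ is linear.
\end{enumerate}
Hence $D_k$ is a closed convex corner. For $w\ge 0$, \eqref{eq:max_k_eig_sdp} gives
\[
S_k(w)=\max_{X\in\mathcal{X}_k}\innprod{w}{\mathcal{L}^*(X)}=\max_{u\in D_k}\innprod{w}{u},
\]
where the second equality holds because $w\ge 0$ and $D_k$ is the downward hull. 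This says $S_k$ is the support function of $D_k$ on the orthant, so $B_k=D_k^\circ$ (the antiblocker of $D_k$).

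The claim is then $B_k^\circ=D_k^{\circ\circ}=D_k$, which is the bipolar theorem for convex corners. The inclusion $D_k\subseteq B_k^\circ$ is immediate: if $u\in D_k$ and $w\in B_k$, then $\innprod{u}{w}\le S_k(w)\le 1$.

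\emph{Step 3: the reverse inclusion, $B_k^\circ\subseteq D_k$.} This is the main obstacle, since the antiblocking bipolar needs closedness, convexity and the down-closed property together. Take $u\ge 0$ with $u\notin D_k$. Because $D_k$ is closed and convex, there is a separating $w$ with $\innprod{w}{u}>\max_{v\in D_k}\innprod{w}{v}$.

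I then replace $w$ by its positive part $w^+$. Every $v\in D_k$ can be lowered to zero on the coordinates where $w<0$ and stay in $D_k$, so the maximum of $\innprod{w}{v}$ over $D_k$ equals the maximum of $\innprod{w^+}{v}$. On the other side, $u\ge 0$ gives $\innprod{w^+}{u}\ge\innprod{w}{u}$. Hence $w^+\ge 0$ still separates:
\[
\innprod{w^+}{u}>S_k(w^+).
\]
Here $S_k(w^+)>0$, since positive definiteness from the preceding lemma applies once $w^+\neq 0$, and $w^+=0$ is impossible because it would give $0>0$. Rescaling $w^+$ so that $S_k(w^+)=1$ puts it in $B_k$ while $\innprod{w^+}{u}>1$. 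Therefore $u\notin B_k^\circ$, which completes the argument.
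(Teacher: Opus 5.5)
Your proposal is correct and takes the same route as the paper. $B_k$ is read off from the minimization SDP \eqref{eq:sum_topk_min}; the minimum is attained because the fixed-$w$ primal--dual pair also has Slater points, e.g.\ $X=(k/n)I$. $B_k^\circ$ comes from recognizing $S_k$ as the support function of the downward hull of $\mathcal{L}^*(\mathcal{X}_k)$. The only difference is in detail: the paper just invokes standard antiblocker duality for convex corners, while you prove the needed identity $D_k^{\circ\circ}=D_k$ directly via strict separation and the positive-part trick, and that argument is sound and self-contained.
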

\begin{proof}
    We briefly comment on this proof as it is a straightforward application of the definitions and standard duality theory. The first equality follows from monotonicity and the fact that
    \[S_k(w) = \max_{X \in \mathcal{X}_k} \langle \mathcal{L}(w) , X \rangle = \max_{X \in \mathcal{X}_k} w^T \mathcal{L}^*(X) = \max_{y \in \mathcal{L}^*(\mathcal{X}_k)} w^T y.\]
    The second equality follows from the minimization SDP formulation and the Minkowski functional presentation of the gauge $S_k$.
\end{proof}

The result below follows from a standard argument of gauge minimization in a simplex, but we include it here with its elementary proof in order to highlight the connection between the gauges $S_k$ and $S_k^\circ$ and the notion of upper $k$-conformal rigidity.

\begin{theorem} \label{thm:gauge_char}
    A graph $G$ is upper $k$-conformally rigid if and only if
    \[ S_k(\allones)S_k^\circ(\allones) = |E|. \]
\end{theorem}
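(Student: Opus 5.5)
The plan is to compute $S_k^\circ(\allones)$ through the dual SDP~\eqref{eq:min_topeigsum_dual}, then compare it with $S_k(\allones)$ using only homogeneity and the characterization of $B_k^\circ$. We use the standard definition of the dual gauge,
\[
S_k^\circ(y)=\max\{\innprod{w}{y} : w\ge 0,\ S_k(w)\le 1\}.
\]

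\textbf{Step 1: general inequality.} Taking $w=\allones/S_k(\allones)$ in this definition gives $S_k^\circ(\allones)\ge |E|/S_k(\allones)$. Hence
\[
S_k(\allones)S_k^\circ(\allones)\ge |E|
\]
always holds, and the theorem reduces to showing that equality holds exactly when $\allones$ minimizes $S_k$ over $\Delta_E$.

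\textbf{Step 2: formula for $S_k^\circ(\allones)$.} By positive homogeneity of $S_k$, any $w\ge0$ with $w\neq 0$ can be rescaled to $w'=|E|w/(\allones^Tw)\in\Delta_E$. Rescaling the constraint $S_k(w)\le 1$ accordingly gives
\[
S_k^\circ(\allones)=\max_{w\ge0,\,w\neq0}\frac{\allones^Tw}{S_k(w)}=\frac{|E|}{\min_{w\in\Delta_E}S_k(w)}.
\]
The minimum exists because $\Delta_E$ is compact and $S_k$ is continuous. It is positive because $S_k$ is a positive definite gauge, so $S_k(w)>0$ for every $w\in\Delta_E$.

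The same value can be read off from the convex corner $B_k^\circ$: the dual of the dual gauge is the gauge itself, so $S_k^\circ(\allones)$ is the reciprocal of the largest $x$ with $x\allones\in B_k^\circ$. Since $B_k^\circ$ is the downward hull of $\mathcal{L}^*(\mathcal{X}_k)$, this largest $x$ is $\max\{x:\mathcal{L}^*(X)\ge x\allones,\ X\in\mathcal{X}_k\}$. That is the dual program~\eqref{eq:min_topeigsum_dual} divided by $|E|$, and strong duality (already established) gives the value $\min_{w\in\Delta_E}S_k(w)/|E|$. The two descriptions therefore agree.

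\textbf{Step 3: conclusion.} From Step 2,
\[
S_k(\allones)S_k^\circ(\allones)=\frac{|E|\,S_k(\allones)}{\min_{w\in\Delta_E}S_k(w)}.
\]
This equals $|E|$ if and only if $S_k(\allones)=\min_{w\in\Delta_E}S_k(w)$, which is exactly upper $k$-conformal rigidity since $\allones\in\Delta_E$.

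The only delicate point is the rescaling in Step 2, since $w=0$ and $S_k(w)=0$ must be excluded. Both issues are handled by the positive definiteness in the preceding lemma, so no other obstacle is expected.
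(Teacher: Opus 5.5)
Your proof is correct, and it reaches the key identity $\min_{w\in\Delta_E}S_k(w)=|E|/S_k^\circ(\allones)$ by a different and more elementary route than the paper.

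\textbf{Your route.} You evaluate $S_k^\circ(\allones)$ as the support function of the unit ball $B_k=\{w\ge 0: S_k(w)\le 1\}$. Positive homogeneity and positive definiteness of $S_k$ then let you rescale onto $\Delta_E$, so the identity follows in one line.

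\textbf{The paper's route.} The paper works on the dual side. It writes $S_k(w)=\max_{y\in B_k^\circ}w^Ty$, with $B_k^\circ$ described as the downward hull of $\mathcal{L}^*(\mathcal{X}_k)$. It then swaps $\min_{w\in\Delta_E}$ and $\max_{y\in B_k^\circ}$ via the minimax theorem, which needs convexity and compactness of both sets. The inner minimum is evaluated at the vertices of the simplex, giving $|E|\min_e y_e$. Finally it reads off the Minkowski functional of $B_k^\circ$ at $\allones$.

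\textbf{What each buys.} Your argument needs neither the minimax theorem nor any description of $B_k^\circ$. It uses nothing about $S_k$ beyond its being a continuous positive definite gauge. The paper's computation identifies $1/S_k^\circ(\allones)$ with the largest $x$ such that $\mathcal{L}^*(X)\ge x\allones$ for some $X\in\mathcal{X}_k$. That is what ties the gauge formulation back to the dual program and to the complementary-slackness and edge-isometry picture of the earlier sections. Your second paragraph of Step 2 reproduces exactly this, with SDP strong duality playing the role of the minimax theorem.

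Step 1 is not needed for the conclusion, though it is harmless.
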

\begin{proof}
    Recall that $\Delta_E := \{w \in \fld{R}^E: w \geq 0,\; \allones^Tw = |E|\}$. We start this proof with the following chain of equalities:
\begin{align}
    \min_{w \in \Delta_E} S_k(w) 
    & = \min_{w \in \Delta_E} \max_{y \in B_k^\circ} w^T y \label{eq1}\\
    & = \max_{y \in B_k^\circ} \min_{w \in \Delta_E} w^T y \label{eq2}\\
    & = \max_{y \in B_k^\circ} \left\{ |E| \min_{e \in E} y_e \right\}\label{eq3}\\
    & = |E| \max_{\lambda \geq 0} \{\lambda : \lambda \allones \in B_k^\circ \} \label{eq4}\\
    & = |E| \frac{1}{S_k^\circ(\allones)}. \label{eq5}
\end{align}
    The first equality follows from the fact that $S_k$ is the dual of $S_k^\circ$. The second equality follows from the minimax theorem, since the function $w^T y$ is linear in both $w$ and $y$, and the sets $\Delta_E$ and $B_k^\circ$ are convex and compact. The third equality follows from the fact that $\min_{w \in \Delta_E} w^T y$, for a fixed $y$, is achieved at a vertex of $\Delta_E$, which corresponds to putting all weight on a single edge. The fourth equality follows from the fact that $B_k^\circ$ is lower comprehensive, as $y \in B_k^\circ$ and $\lambda \leq \min_{e \in E} y_e$ implies $\lambda \allones \in B_k^\circ$. Finally, the last equality follows from the definition of the dual gauge $S_k^\circ$, since $S_k^\circ(\allones) = \min_{\lambda} \{ \lambda : \allones \in \lambda B_k^\circ \}$.

    The equivalence follows directly: $G$ is upper $k$-conformally rigid if and only if $S_k(\allones) = \min_{w \in \Delta_E} S_k(w)$, and by \eqref{eq5} this is equivalent to
    \[S_k(\allones) = \frac{|E|}{S_k^\circ(\allones)}.\]
\end{proof}

\section{Majorization and Combinatorial Consequences}\label{sec:majorization}

In this short section, we show two non-trivial combinatorial consequences of total conformal rigidity, obtained upon applying the theory of majorization of vectors (see Bhatia~\cite[Chapter II]{bhatiaMatrixAnalysis1997} for a standard reference). Recall that given two vectors $x,y \in \mathbb{R}^n$ ordered such that $x_1 \geq \ldots \geq x_n$ and $y_1 \geq \ldots \geq y_n$, we say that $x$ is majorized by $y$ if
\[
\sum_{i=1}^k x_i \leq \sum_{i=1}^k y_i \quad \text{for all } k \in [n] \quad \text{and} \quad \sum_{i=1}^n x_i = \sum_{i=1}^n y_i.
\]

Let $\operatorname{supp}(w) := \{e \in E : w_e > 0\}$ denote the support of a weight assignment $w \in \Delta_E$. Because edge weights are permitted to be zero, a choice of weights may result in a weighted graph that is disconnected even though the underlying simple graph $G$ is connected. We thus define the set of non-disconnecting valid edge weights as
\[
\Delta_E^+ := \{w \in \Delta_E : (V, \operatorname{supp}(w)) \text{ is connected}\},
\]
which includes all strictly positive weight assignments.
Equivalently, $\Delta_E^+$ consists of all weight vectors whose support contains the edge set of at least one spanning tree of $G$.
This combinatorial characterization is precisely what guarantees that the algebraic connectivity of the weighted graph remains strictly positive. That is, for any $w \in \Delta_E$, we have $w \in \Delta_E^+$ if, and only if,
\[
0 = \lambda_1(w) < \lambda_2(w) \leq \ldots \leq \lambda_n(w),
\]
meaning the weighted Laplacian $\mathcal{L}(w)$ maintains rank $n-1$. The results derived in this section rely on the following key observation: $G$ being totally conformally rigid is precisely equivalent to the vector of eigenvalues of $L$ being majorized by the vector of eigenvalues of $\mathcal{L}(w)$ for any $w \in \Delta_E^+$.

The weighted matrix tree theorem~\cite{kirchhoff1847ueber} states that for a given graph $G$ with edge weights $w \in \fld{R}^E$,
\[\tau(G,w) := \sum_T \prod_{e \in T} w_e = \frac{1}{n} \prod_{i = 2}^{n} \lambda_i (w),
\]
where the sum is over all spanning trees $T$ of $G$. We use this to prove the following:

\begin{theorem}
    If $G$ is totally conformally rigid, then $w = \allones$ maximizes $\tau(G,w)$ over all $w \in \Delta_E$.
\end{theorem}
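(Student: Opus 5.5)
The plan is to use the majorization observation stated just before the theorem together with Schur-concavity of the product of nonnegative entries, i.e.\ of $\sum\log$. First I dispose of weights outside $\Delta_E^+$: if $w \in \Delta_E \setminus \Delta_E^+$, then $\lambda_2(w)=0$. Hence $\tau(G,w)=0$, while $\tau(G,\allones)\geq 1$ because $G$ is connected. So it suffices to compare $\allones$ with $w \in \Delta_E^+$.

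Fix $w \in \Delta_E^+$ and set $\mu := (\lambda_2(\allones),\ldots,\lambda_n(\allones))$ and $\nu := (\lambda_2(w),\ldots,\lambda_n(w))$, both arranged in decreasing order. I first check that $\mu$ is majorized by $\nu$. Upper $k$-conformal rigidity for every $k\in[n-1]$ gives $S_k(\allones)\le S_k(w)$; these are exactly the partial sums of the $k$ largest entries. For the total sum, take $k=n-1$: $S_{n-1}(w)=\tr{\mathcal{L}(w)}=2\,\allones^Tw=2|E|=S_{n-1}(\allones)$, so equality holds. This uses only the upper half of total conformal rigidity, which Theorem~\ref{thm:total_rigid_char} guarantees.

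Next I apply the standard fact that $\phi(x)=\sum_i \log x_i$ is Schur-concave on the positive orthant, since it is a symmetric concave function. Because $\mu$ is majorized by $\nu$, we get $\phi(\nu)\le\phi(\mu)$. The entries of $\nu$ are positive because $w\in\Delta_E^+$, and those of $\mu$ are positive because $G$ is connected. Concretely, by Hardy--Littlewood--P\'olya, $\mu$ is a convex combination of permutations of $\nu$, and concavity of $\phi$ together with its symmetry gives the inequality. Exponentiating yields $\prod_{i\ge2}\lambda_i(w)\le\prod_{i\ge2}\lambda_i(\allones)$. The weighted matrix tree theorem then gives $\tau(G,w)\le\tau(G,\allones)$.

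I do not expect a serious obstacle. The only delicate point is the boundary case where $\nu$ has zero entries, and this is already handled by the case split above. Alternatively, it can be handled by continuity, using the extended-valued $\log$ so that Schur-concavity still holds.
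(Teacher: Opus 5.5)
Your proposal is correct and follows essentially the same route as the paper: the nontrivial spectrum of $L$ is majorized by that of $\mathcal{L}(w)$ for $w \in \Delta_E^+$, concavity of $\log$ (Schur-concavity of $\sum \log$) gives the inequality, the weighted matrix tree theorem converts it, and $\tau(G,w)=0$ handles $w \notin \Delta_E^+$. Beyond the paper, you spell out two things it leaves implicit: the equal-total-sum condition via $\tr{\mathcal{L}(w)} = 2\allones^T w = 2|E|$, and the Hardy--Littlewood--P\'olya/Birkhoff mechanism behind the cited majorization inequality. One small point: upper $k$-conformal rigidity for every $k$ is already part of the definition of total conformal rigidity, so you do not need Theorem~\ref{thm:total_rigid_char} at that step.
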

\begin{proof}
    As $G$ is totally conformally rigid, it follows that for any $w \in \Delta_E^+$, the eigenvalues of $L$ are majorized by the eigenvalues of $\mathcal{L}(w)$. Since $\log$ is a strictly concave function on $\fld{R}_{++}$, we can apply~\cite[Theorem II.3.1]{bhatiaMatrixAnalysis1997} to conclude
    \[\sum_{i=2}^n \log \lambda_i(\allones) \ge \sum_{i=2}^n \log \lambda_i(w).\]
    Taking exponentials on both sides and applying the matrix tree theorem yields $\tau(G,\allones) \ge \tau(G,w)$ for all $w \in \Delta_E^+$. For weights $w \in \Delta_E \setminus \Delta_E^+$, the graph is disconnected, yielding $\tau(G,w) = 0 \le \tau(G,\allones)$. Thus, the maximum over the entire simplex is achieved at $w = \allones$.
\end{proof}

The Kirchhoff index of a weighted graph $G$ is defined as
\[ \mathrm{Kf}(G,w) := \sum_{\{a,b\} \subseteq V} z_{ab}^T \mathcal{L}(w)^\dagger z_{ab}, \]
where $\mathcal{L}(w)^\dagger$ is the Moore-Penrose pseudoinverse of the weighted Laplacian $\mathcal{L}(w)$ and $z_{ab} = e_a - e_b$. The term $z_{ab}^T \mathcal{L}(w)^\dagger z_{ab}$ is the effective resistance between vertices $a$ and $b$ (see \cite{KleinRandic1993ResistanceDistance} for a standard reference). If the graph is disconnected, we conventionally define $\mathrm{Kf}(G,w) = \infty$.

\begin{theorem}
    If $G$ is totally conformally rigid, then $w = \allones$ minimizes $\mathrm{Kf}(G,w)$ over all $w \in \Delta_E$.
\end{theorem}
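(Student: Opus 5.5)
The argument mirrors the proof of the spanning-tree theorem above, with the convex function $t \mapsto 1/t$ on $\fld{R}_{++}$ in place of the concave function $\log$.

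The first step is to write the Kirchhoff index spectrally. For a weighted connected graph,
\[
\mathrm{Kf}(G,w) = n\,\tr{\mathcal{L}(w)^\dagger} = n\sum_{i=2}^n \frac{1}{\lambda_i(w)}.
\]
To verify this, sum $z_{ab}^T\mathcal{L}(w)^\dagger z_{ab} = \mathcal{L}(w)^\dagger_{aa}+\mathcal{L}(w)^\dagger_{bb}-2\mathcal{L}(w)^\dagger_{ab}$ over all unordered pairs. This gives $n\,\tr{\mathcal{L}(w)^\dagger} - \allones^T\mathcal{L}(w)^\dagger\allones$. The last term vanishes because $\mathcal{L}(w)^\dagger\allones = 0$, since $\allones$ spans the kernel of $\mathcal{L}(w)$ when $w \in \Delta_E^+$.

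The second step handles the disconnected case. If $w \in \Delta_E\setminus\Delta_E^+$, the convention gives $\mathrm{Kf}(G,w)=\infty \ge \mathrm{Kf}(G,\allones)$. So it suffices to treat $w \in \Delta_E^+$, where $\lambda_2(w),\ldots,\lambda_n(w)$ are all strictly positive.

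The third step is the majorization argument. Fix $w \in \Delta_E^+$. Total conformal rigidity gives $S_k(\allones) \le S_k(w)$ for every $k \in [n-1]$. Both traces equal $2|E|$, since $\tr{\mathcal{L}(w)} = 2\allones^Tw = 2|E|$, so the two sums coincide at $k=n-1$. Hence the vector $(\lambda_2(\allones),\ldots,\lambda_n(\allones))$ is majorized by $(\lambda_2(w),\ldots,\lambda_n(w))$. The function $t\mapsto 1/t$ is convex on $\fld{R}_{++}$, and both vectors lie in that domain. By the standard result for convex functions of majorized vectors~\cite[Theorem II.3.1]{bhatiaMatrixAnalysis1997},
\[
\sum_{i=2}^n \frac{1}{\lambda_i(\allones)} \le \sum_{i=2}^n \frac{1}{\lambda_i(w)}.
\]
Multiplying by $n$ gives $\mathrm{Kf}(G,\allones)\le\mathrm{Kf}(G,w)$.

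The main obstacle is checking the direction of the inequality together with the domain requirement. Majorization combined with convexity must give $\sum f(x_i) \le \sum f(y_i)$ when $x$ is majorized by $y$, which is exactly the orientation needed here. Applying the inequality also requires every entry to lie in $\fld{R}_{++}$, and this is precisely why the argument is restricted to $\Delta_E^+$.
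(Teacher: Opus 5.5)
Your proof is correct and follows the paper's argument: you restrict to $\Delta_E^+$ (the disconnected case being $\infty$ by convention), write $\mathrm{Kf}(G,w)=n\sum_{i=2}^n 1/\lambda_i(w)$, obtain majorization from upper $k$-conformal rigidity together with the equal traces $2|E|$, and apply the convex-function majorization inequality to $t\mapsto 1/t$ on $\mathbb{R}_{++}$. The only difference is that you verify the spectral formula for the Kirchhoff index and the majorization step explicitly, whereas the paper cites the former and asserts the latter as its key observation.
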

\begin{proof}
    For $w \in \Delta_E \setminus \Delta_E^+$, the graph is disconnected and $\mathrm{Kf}(G,w) = \infty$, which is trivially suboptimal. For $w \in \Delta_E^+$, one can express the Kirchhoff index as
    \[\mathrm{Kf}(G,w) = n \sum_{i=2}^n \frac{1}{\lambda_i(w)},\]
    which follows from an elementary expansion of the definition (see~\cite{Gutman1996}).
    As $G$ is totally conformally rigid, the eigenvalues of $L$ are majorized by the eigenvalues of $\mathcal{L}(w)$ for any $w \in \Delta_E^+$. Since $x \mapsto 1/x$ is a convex function on $\fld{R}_{++}$, again applying~\cite[Theorem II.3.1]{bhatiaMatrixAnalysis1997} yields
    \[\sum_{i=2}^n \frac{1}{\lambda_i(\allones)} \le \sum_{i=2}^n \frac{1}{\lambda_i(w)},\]
    giving $\mathrm{Kf}(G,\allones) \le \mathrm{Kf}(G,w)$, which completes the proof.
\end{proof}
\section{Conclusion}\label{sec:conclusion}

We introduced and studied $k$-conformal rigidity, a generalization of the conformal
rigidity of Steinerberger and Thomas~\cite{steinerberger2025conformallyrigidgraphs},
and showed that the strongest form of this property --- total conformal rigidity ---
admits a complete characterization.

Our main result establishes the equivalence of seven conditions, following from Corollaries~\ref{corl:equiv_rigid_cospectral}, \ref{corl:line_walk}, \ref{corl:walk_reg_bireg}, and Theorem~\ref{thm:gauge_char}. These conditions range from spectral properties of the Laplacian, to combinatorial properties of walks, to geometric properties of gauges, to optimization properties of semidefinite programs.

\begin{corl}
    For a graph $G$, the following are equivalent:
    \begin{enumerate}[(a)]
        \item All edges of $G$ are pairwise Laplacian-cospectral.
        \item $G$ is edge-rigid.
        \item $G$ is totally conformally rigid.
        \item For every Laplacian eigenprojector $E_i$, the vector $\mathcal{L}^*(E_i)$ is
        constant.
        \item For any orientation of the edges of $G$, the corresponding signed line graph of $G$ is walk-regular.
        \item $G$ is either $1$-walk-regular or $1$-walk-biregular.
        \item For all $k \in [n-1]$
            \[ S_k(\allones)S_k^\circ(\allones) = |E|. \]
    \end{enumerate}
    \hfill$\blacksquare$
\end{corl}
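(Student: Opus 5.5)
The plan is to assemble the seven-way equivalence from the results already proved, using (b) as the hub. The equivalences (a)$\Leftrightarrow$(b)$\Leftrightarrow$(c)$\Leftrightarrow$(d) are exactly Corollary~\ref{corl:equiv_rigid_cospectral}. The equivalence (b)$\Leftrightarrow$(d) is also essentially definitional. Edge-rigidity asks that $\mathcal{L}^*(E_i)=\gamma_i\allones$ with $\gamma_i>0$ for each nontrivial eigenprojector. Condition (d) asks only for constancy, and $\mathcal{L}^*(E_1)=0$ for the trivial projector. For $i\ge 2$, the constant must be positive: $\langle \mathcal{L}^*(E_i),\allones\rangle=\tr{LE_i}=\lambda_i\tr{E_i}>0$. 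So no new work is needed there.

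Next, (b)$\Leftrightarrow$(e) is Corollary~\ref{corl:line_walk}, and (b)$\Leftrightarrow$(f) is Corollary~\ref{corl:walk_reg_bireg}. I would cite them directly.

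The only step requiring a short argument is linking (g) with (c). By Theorem~\ref{thm:gauge_char}, for each fixed $k$, the identity $S_k(\allones)S_k^\circ(\allones)=|E|$ holds if and only if $G$ is upper $k$-conformally rigid. Therefore (g) is equivalent to $G$ being upper $k$-conformally rigid for all $k\in[n-1]$.

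To close the loop, I then need the fact that upper $k$-conformal rigidity for all $k$ is equivalent to total conformal rigidity. One direction is trivial. For the other, I would invoke the Proposition following Corollary~\ref{corl:equiv_rigid_cospectral}: upper rigidity for all $k$ is equivalent to lower rigidity for all $k$, so together these give $k$-conformal rigidity for every $k$.

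Alternatively, the proof of Theorem~\ref{thm:total_rigid_char} itself uses only upper rigidity. Lemma~\ref{lemma:necessity_lemma} applied at each $k_j$ yields $\mathcal{L}^*(E_i)=\gamma_i\allones$, that is, (b), from upper rigidity alone. This gives (g)$\Rightarrow$(b) directly, while (c)$\Rightarrow$(g) follows from Theorem~\ref{thm:gauge_char}.

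The main subtlety I expect is in this last link. In Lemma~\ref{lemma:necessity_lemma}(1), the final index $j=r-1$ gives $k_{r-1}=n-1$ and $X_{r-1}=I-E_1$. The differences $X_{j+1}-X_j=E_{r-j}$ must therefore be taken for $j$ from $0$ to $r-2$, with the convention $X_0=0$, to recover every $E_i$ with $i\ge 2$. I would spell this telescoping out explicitly rather than rely on the restriction $j\in[r-2]$ in part (2). With that checked, all seven conditions are equivalent to (b), and the corollary follows.
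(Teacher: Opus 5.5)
Your proposal is correct and matches the paper's argument, which likewise assembles the seven-way equivalence from Corollaries~\ref{corl:equiv_rigid_cospectral}, \ref{corl:line_walk}, \ref{corl:walk_reg_bireg} and Theorem~\ref{thm:gauge_char}. You also make explicit a step the paper leaves implicit: condition (g) only encodes upper $k$-conformal rigidity for every $k$, and this suffices for (c) via the upper/lower Proposition, or via the upper-only use of Lemma~\ref{lemma:necessity_lemma} with the telescoping starting at $X_0=0$.
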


Our main open question is to understand the structure of graphs that are $k$-conformally rigid for some specific values of $k$, but not for all $k$ simultaneously. Given any constant $C$, can we generate a graph that is $k$-conformally rigid for all $k \leq C$ but not for $C+1$? Are there graphs that are $2$-conformally rigid but not $1$-conformally rigid? Is there a constant $C < 1$ such that if a graph is $k$-conformally rigid for all $k \leq Cn$, then it is totally conformally rigid?

We would also like to understand which are the non totally conformally rigid graphs (if any) for which $w = \allones$ is the optimizer of $\tau(G,w)$ or $\mathrm{Kf}(G,w)$ over $w \in \Delta_E$.

\section*{Acknowledgments}
Henrique Assumpção and Gabriel Coutinho are supported by FAPEMIG and CNPq. 
Chris Godsil is supported by NSERC (Grant No. RGPIN-9439).

\printbibliography
\end{document}